%
%
%
\documentclass[11pt]{amsproc}
\addtolength{\oddsidemargin}{-.5in}
\addtolength{\evensidemargin}{-.5in} \addtolength{\textwidth}{1in}

\usepackage{pifont}
\usepackage[mathcal]{euscript}
\usepackage{amsthm}
\usepackage[numbers,sort&compress]{natbib}
\usepackage{hyperref}
\usepackage{cleveref}
\usepackage{amsfonts}
\usepackage{amsmath,amssymb}
\usepackage{indentfirst,latexsym,bm,amsthm,graphicx,colortbl}
\usepackage{fancyhdr}
\usepackage{times}
\usepackage{amsmath,amsfonts,amssymb,amsthm}
\newtheorem{theorem}{Theorem}[section]
\newtheorem{claim}[theorem]{claim}

\newtheorem{conj}[theorem]{Conjecture}
\newtheorem{notation}[theorem]{Notation}
\newtheorem{prop}[theorem]{Proposition}
\newtheorem{theo}[theorem]{Theorem}
\theoremstyle{definition}
\newtheorem{defi}[theorem]{Definition}

\newtheorem{coro}[theorem]{Corollary}
\theoremstyle{remark}
\newtheorem{remark}[theorem]{Remark}
\makeatletter
\newcommand{\rmnum}[1]{\romannumeral #1}
\newcommand{\Rmnum}[1]{\expandafter\@slowromancap\romannumeral #1@}
\makeatother

\numberwithin{equation}{section}



\title{ON THE LENGTH OF GENERATING SETS WITH CONDITIONS ON MINIMAL POLYNOMIAL}
\author{Chengjie Wang}

\begin{document}
\begin{abstract}
Linear upper bounds may be derived by imposing specific structural conditions on a generating set, such as additional constraints on ranks, eigenvalues, or the degree of the minimal polynomial of the generating matrices. This paper establishes a linear upper bound of \(3n-5\) for generating sets that contain a matrix whose minimal polynomial has a degree exceeding \(\frac{n}{2}\), where \(n\) denotes the order of the matrix. Compared to the bound provided in \cite[Theorem 3.1]{r2}, this result reduces the constraints on the Jordan canonical forms. Additionally, it is demonstrated that the bound \(\frac{7n}{2}-4\) holds when the generating set contains a matrix with a minimal polynomial of degree \(t\) satisfying \(2t\le n\le 3t-1\). The primary enhancements consist of quantitative bounds and reduced reliance on Jordan form structural constraints.
\medskip

\raggedright\text{Keywords:} {the full matrix algebra; length of an algebra; generating systems; the degree of minimal polynomial}
\end{abstract}
\maketitle

\section{Introduction}
\label{S1}
In the study of finite-dimensional algebras, invariants such as the length of an algebra play an important role. For the full matrix algebra \( \mathrm{M}_n(\mathbb{F}) \), the problem of computing its length first emerged in the 1950s through the work of Spencer and Rivlin \cite{r10,r11}, who applied the theory of \(3\times3\) matrix algebras to isotropic continuum mechanics. In the general case, the challenge of calculating the length and establishing bounds for it in terms of matrix size was formally posed by Paz in 1984 \cite{r1} and remains unresolved. A central conjecture claims that the length has a linear upper bound of the following form:
\begin{conj}
\label{C1}
Let $\mathbb{F}$ be an arbitrary field. Then \(
    \ell\big(\mathrm{M}_n(\mathbb{F})\big) = 2n - 2.\)
\end{conj}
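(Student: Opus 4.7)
The plan is to show that for any generating set $\mathcal{S}\subset\mathrm{M}_n(\mathbb{F})$, every product of elements of $\mathcal{S}$ of length at most $2n-2$, together with the identity and products of shorter length, spans $\mathrm{M}_n(\mathbb{F})$. I would first make the standard reductions: the length is invariant under simultaneous conjugation, monotone under enlargement, and depends only on the unital algebra generated, so I may assume $\mathcal{S}$ is finite, irredundant, and contains at least one matrix $A$ chosen to be as spectrally rich as possible. The optimistic sub-case is when some generator can be taken non-derogatory; then $\{I,A,\dots,A^{n-1}\}$ is a basis of $\mathbb{F}[A]$ and equals its own centralizer, and one has to study how the remaining generators interact with this cyclic chain to produce the missing $n^2-n$ basis vectors in at most $n-1$ further multiplication steps.

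The core of the argument is a dimension-growth estimate on the filtration $L_0\subset L_1\subset\cdots$, where $L_k$ denotes the span of all words in $\mathcal{S}$ of length at most $k$. The strategy is to prove that whenever $L_k\neq L_{k-1}$ one must have $\dim L_k\ge \dim L_{k-1}+1$, which is automatic, but also to rule out long stretches of stagnation: if $L_k=L_{k-1}$ then $L_k$ is a subalgebra closed under left and right multiplication by $\mathcal{S}$, hence equals the whole algebra. Combining this with a sharper bound that forces a second jump per step in a suitable range, one aims to reach $n^2$ by step $2n-2$. This is exactly where Paz, Pappacena, and later authors have only been able to prove $O(n^{3/2})$ and $O(n\log n)$ type bounds, so the dimension-increment analysis is the principal obstacle.

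The main difficulty, which is what renders the conjecture open and motivates the conditional theorems of the present paper, is the case where no generator has a minimal polynomial of sufficiently large degree. When every $A\in\mathcal{S}$ satisfies $\deg m_A\le t$ with $t$ small relative to $n$, no single cyclic chain $I,A,\dots,A^{t-1}$ contributes enough of the filtration, and all the growth must come from genuinely non-commuting mixed words; controlling these requires a delicate analysis of common invariant subspaces, of the lattice of centralizers, and of how Jordan block structure interacts with multiplication by other generators. I would try to induct on $n$ by locating a proper $\mathcal{S}$-invariant subspace (or, dually, a common quotient) of dimension close to $n/2$, apply the conjecture to the two smaller blocks, and then spend the remaining budget of $O(1)$ multiplications to glue the pieces across the block-triangular decomposition; producing such an invariant subspace unconditionally, and spending only a bounded number of extra multiplications in the gluing step, is the step I expect to be genuinely hard and, so far, to require the auxiliary hypotheses on the minimal polynomial that the paper under review imposes.
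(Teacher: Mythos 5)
This statement is Paz's conjecture, which the paper explicitly presents as \emph{open}: there is no proof of it anywhere in the paper, only conditional partial results (Theorems \ref{T10}, \ref{T11}, \ref{T12}) under hypotheses on $m(\mathcal{S})$. So the honest assessment is that your proposal is not a proof but a research programme, and its two load-bearing steps are precisely the ones that do not exist. First, the only dimension-growth fact you actually establish is the standard stabilization argument (if $L_k(\mathcal{S})=L_{k-1}(\mathcal{S})$ then the chain has stabilized at the full algebra), which yields nothing better than $\ell(\mathcal{S})\le \dim \mathrm{M}_n(\mathbb{F})-1=n^2-1$. The ``sharper bound that forces a second jump per step in a suitable range'' is never formulated, let alone proved; it is exactly the missing ingredient, and the best unconditional increments known give only Paz's $\lceil (n^2+2)/3\rceil$, Pappacena's $O(n^{3/2})$, and Shitov's $2n\log_2 n+4n-4$. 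Asserting that one ``aims to reach $n^2$ by step $2n-2$'' is a restatement of the conjecture, not progress toward it.

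Second, the proposed induction is structurally impossible as stated. If $\mathcal{S}$ generates all of $\mathrm{M}_n(\mathbb{F})$, then $\mathcal{S}$ acts irreducibly on $\mathbb{F}^n$ (over an algebraically closed field this is Burnside's theorem, and in general the full matrix algebra has no proper nonzero invariant subspace), so there is no proper $\mathcal{S}$-invariant subspace and no nontrivial common quotient to induct on. Any block-triangular decomposition would force the generated algebra to be a proper subalgebra, contradicting the hypothesis. The viable conditional route --- the one the paper and its predecessors \cite{r2,r4} actually take --- is entirely different: use a single matrix $A$ of large minimal polynomial degree to produce a low-rank matrix inside $\langle E,A,\dots,A^{\deg A-1}\rangle$ via its Jordan form, and then invoke the rank-based length bounds (Theorem 3.4 and Corollary 3.5 of the paper). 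That machinery is what yields $3n-5$ and $\tfrac{7n}{2}-4$ under the stated hypotheses; it does not, and is not claimed to, prove $2n-2$ in general.
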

It has been proven that the aforementioned conjecture holds for matrices of size at most $7$, as shown in \cite{r1,r3,r30, r38}. Moreover, the linear upper bound $2n-2$ can be established for generating sets satisfying specific conditions, as demonstrated in \cite{r2,r3,r4,r5,r7,r8,r12,r21}. For example, Pappacena \cite[Theorem 4.1(b)]{r4} showed in 1997 that the length of a generating set of the full matrix algebra $\mathrm{M}_n(\mathbb{F})$ containing a matrix with $n$ distinct eigenvalues is bounded above by $2n-2$. Constantine and Darnall \cite{r21} (2005) provided a proof of Paz’s conjecture for matrix sets satisfying a modified \textbf{Poincaré-Birkhoff-Witt (PBW)} property. Recent studies have confirmed the upper bound \(2n-2\) for two-element generating sets under specific conditions. Specifically, Longstaff \cite[Theorem 2]{r12} (2004) derived this bound for irreducible pairs $\{A, B\}$, where $B$ is the $n \times n$ strictly upper triangular elementary Jordan matrix. Building upon Longstaff, Niemeyer, and Panaia's foundational framework \cite{r8}, Lambrou and Longstaff \cite{r7} completed the theoretical verification of Paz's conjecture for pairs of complex matrices of size up to $6$. In 2011, the same bound $2n-2$ was established for irreducible pairs containing a rank-one matrix \cite{r5}. Finally, the conjecture is known to hold if the generating set includes a \textbf{non-derogatory} matrix over an arbitrary field $\mathbb{F}$ \cite{r2}. 

The currently known general upper bounds for the length of the matrix algebra $\mathrm{M}_n(\mathbb{F})$ are not linear in $n$. A foundational result by Paz established that $\ell\left( \mathrm{M}_n(\mathbb{F}) \right) \leq \left\lceil \frac{n^2 + 2}{3} \right\rceil$, where $\lceil \cdot \rceil$ denotes the ceiling function. A subquadratic bound was later derived by Pappacena \cite[Corollary 3.2]{r4} in 1997: $\ell\left( \mathrm{M}_n(\mathbb{F}) \right) < n \sqrt{\frac{2n^2}{n-1} + \frac{1}{4}} + \frac{n}{2} - 2$. This bound remained unbroken for over two decades until 2019, when Shitov \cite{r3} significantly improved the upper bound to $\ell(\mathcal{S}) \leq 2n \log_2 n + 4n - 4$, valid for all generating sets $\mathcal{S} \subset \mathrm{M}_n(\mathbb{F})$.

Nevertheless, some linear upper bounds can also be obtained by imposing special conditions on generating sets, as demonstrated in \cite{r2,r4,r6,r13,r29}. In particular, Markova \cite[Theorem 8.2]{r13} proved that if a generating set $\mathcal{S}$ of the full matrix algebra $\mathrm{M}_n(\mathbb{F})$ contains a matrix $A$ with an eigenvalue $\lambda$ whose corresponding Jordan block of maximal size is unique, then $\ell( \mathcal{S})\leq 2n+ \deg A-3$. In 2009, Guterman and Markova \cite[Theorem 6.1]{r29} demonstrated that for a commutative generating set $\mathcal{S}$, the bound $\ell( \mathcal{S})\leq n-1$ holds, extending Paz's results on commutative matrix algebras to arbitrary fields. Subsequent work by Guterman, Laffey, Markova and {\v{S}}migoc \cite{r2} in 2018 derived upper bounds for generating sets of the full matrix algebra $\mathrm{M}_n(\mathbb{F})$ under specific structural constraints. Their key results include:
\begin{itemize}

\item[(1)] For generating sets containing a matrix with the minimal polynomial of degree $n-k$ over an algebraically closed field $\mathbb{F}$, where $2k<n$ and specific Jordan block configurations are satisfied, the authors proved an upper bound of $2n-2+k$. 

\item[(2)] When $n$ is even and the generating sets include a matrix decomposed as $J_{\frac{n}{2}}(\lambda)\oplus J_{\frac{n}{2}}(\lambda)$ (with $J_{k}(\lambda)$ denoting a Jordan block of size $k$), a refined upper bound $\frac{5n}{2}-2$ was established.
\end{itemize}
These results highlight the interplay between matrix invariants (e.g., the degree of the minimal polynomial and Jordan forms) and the efficiency of generating sets in matrix algebras. 

Although Conjecture \ref{C1} remains unresolved, the length can be accurately calculated for many types of matrix sets and appropriate subalgebras. For further details, see \cite{r13,r14,r16,r17,r18,r19,r28,r29} and references therein. The following examples illustrate key developments. In 2005, Markova \cite{r14} proved that the length of the algebra of upper triangular matrices of size $n$ equals $n-1$ and established upper and lower bounds for lengths of direct sums of block triangular matrix algebras. In 2008, Markova \cite{r13} computed lengths for classical commutative matrix subalgebras including Schur algebra and Courter's algebra, while initiating studies on relationships between algebra lengths and their subalgebras. Guterman and Markova \cite{r29} demonstrated in 2009 that maximal commutative matrix subalgebras attain maximal length under inclusion. Markova \cite{r16} further investigated length function properties in 2010, deriving upper bounds for lengths of local algebras. In the same year, Rosenthal \cite{r17} generated the full matrix algebra $\mathrm{M}_n(\mathbb{F})$ by means of an unconventional angle, proving that for every $n$ and field $\mathbb{F}$, there exists $n$ matrices whose length-$2$ words span $\mathrm{M}_n(\mathbb{F})$. Rosenthal posed whether $m$ matrices exist with length-$k$ words spanning the full algebra $\mathrm{M}_n(\mathbb{F})$. In 2014, Guterman, Markova and Sochnev \cite{r19} studied length functions of semimagic matrix algebras for different generating systems. Klep and {\v{S}}penko \cite{r18} resolved the even case of Rosenthal's problem in 2016, leaving the odd case open. In 2022, Kolegov and Markova \cite{r28} investigated lengths of matrix incidence algebras over small finite fields.

The study of determining the lengths of group algebras has been gradually advancing. It has been demonstrated that matrix representations of group algebra are closely linked to resolving the length problem for group algebras and proving Paz's conjecture. In 2019, Guterman and Markova's seminal work \cite{r22} established estimates for the lengths of group algebras over an arbitrary field for all groups of order up to $7$. Specifically, they determined the lengths for the permutation group $S_3$ and the Klein group $V_4$. In the same year, they \cite{r34} also calculated the length of the group algebra of the quaternion group $Q_8$ over an arbitrary field. Guterman, Markova and Khrystik \cite{r32, r31} systematically studied the length of the group algebra over an Abelian group under specific conditions. In 2022, Markova further derived the length of the group algebra of a noncyclic abelian group satisfying certain conditions \cite{r24}. In 2024, the work of Khrystik presented in \cite{r33} determined the length of the group algebra of the direct product of cyclic groups and cyclic $p$-groups over a field of characteristic $p$. In 2025, Shitov further derived an explicit formula for $\ell(\mathbb{F}[G])$ for any finite Abelian group $G$ and field $\mathbb{F}$ \cite{s1}, fully solving a key problem in group algebra theory. For the group algebra over a non-Abelian group, people have carried out relevant research by applying the existing classification work of classical non-Abelian groups. Markova and Khrystik \cite{r35} investigated the length of the group algebra of dihedral groups and solved the problem of length calculation in the semi-simple case. Subsequently, Markova and Khrystik proved in 2021 \cite[Theorem 4.10]{r23} that the length of the group algebra of the dihedral group of order $2^{k+1}$ over an arbitrary field of characteristic $2$ is equal to $2^k$ by using the method in \cite{r32}. In 2025, Shitov \cite{d1} proved that $\ell(\mathbb{F}[D_n])=n$ for any field $\mathbb{F}$ and $n\ge 3$, resolving a long-standing conjecture.

Recent research has focused on the length of non-associative algebras. In 2017, Guterman and Kudryavtsev \cite{r20} introduced the concept of length for non-associative algebras, analogous to the associative case, and computed the length function for the quaternion and octonion algebras as $2$ and $3$, respectively. A sharp upper bound for the length of a non-associative algebra was later established in \cite{r27}. In 2024, Guterman and Zhilina \cite{r26} derived upper bounds for the lengths of descendingly flexible and descendingly alternative algebras. Subsequently, in \cite{r25}, they fully characterized the lengths of standard composition algebras and Okubo algebras.

In this paper, all matrices are defined over an algebraically closed field $\mathbb{F}$. We adopt this assumption to utilize the Jordan canonical form, which is central to our technical arguments. When referencing prior results that hold over arbitrary fields, we explicitly state their field of validity. We contribute the following linear upper bounds:
\begin{itemize}
\item[(1)] For generating sets that contain a matrix with the degree of the minimal polynomial $d>\frac{n}{2}$, we establish the linear upper bound $3n-5$ significantly relaxing the Jordan canonical form constraints required in \cite[Theorem 3.1]{r2}.
\begin{theo}[Theorem \ref{T10}]
Let $\mathbb{F}$ be an algebraically closed field and let $n, t\in\mathbb{Z}^+$, $n=2t$. Let $\mathcal{S}$ be a generating system of $\mathrm{M}_n(\mathbb{F})$. If $m(\mathcal{S})=t+k$, where $k=1,2,\cdots,t$, then $\ell\left(\mathcal{}{S}\right)\le 3n-5$.
\end{theo}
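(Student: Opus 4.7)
The plan is to exploit the large cyclic subspace induced by a chosen $A \in \mathcal{S}$ with $\deg \mu_A = d := t+k$. Since conjugation preserves length, I may assume $A$ is in Jordan canonical form, giving a decomposition $\mathbb{F}^n = V_1 \oplus V_2$ in which $A|_{V_1}$ is cyclic of dimension $d$ and realizes $\mu_A$, while $V_2$, of dimension $n-d = t-k < d$, carries the strictly smaller Jordan blocks. Let $\mathcal{L}_m$ denote the linear span of all words of length at most $m$ in letters of $\mathcal{S}$ (including the empty word $I$); the goal is to prove $\mathcal{L}_{3n-5} = \mathrm{M}_n(\mathbb{F})$. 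I would then pick some $B \in \mathcal{S} \setminus \mathbb{F}[A]$, which exists since $\dim \mathbb{F}[A] = d \le n < n^2$ and $\mathcal{S}$ generates $\mathrm{M}_n(\mathbb{F})$.

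The construction proceeds in three phases. Phase~1 accumulates $\mathbb{F}[A]$ via the chain $I, A, A^2, \ldots, A^{d-1}$, using $d-1 \le n-1$ letters. Phase~2 uses words of the form $A^i B A^j$ with small exponents to produce matrix units connecting $V_1$ both to itself and to $V_2$; the cyclicity of $A|_{V_1}$ ensures that each insertion of $B$ between two powers of $A$ can yield a fresh linearly independent matrix, and a Vandermonde-type argument in the powers of $A$ (combined with Cayley--Hamilton to truncate exponents) should certify that roughly $n-1$ further letters suffice. Phase~3 then saturates the remaining $\mathrm{End}(V_2)$-corner, of dimension $(t-k)^2 \le (d-1)^2$, by multiplying the phase-2 matrices by further conjugates of $B$ under powers of $A$; the balance of letters up to the target $3n-5$ is absorbed here. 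Throughout, Cayley--Hamilton applied to $A$ (and to polynomial expressions in $A$ and $B$) closes the chain $\mathcal{L}_0 \subset \mathcal{L}_1 \subset \cdots$ once no new dimensions are gained.

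The main obstacle, and the reason this theorem improves on \cite[Theorem 3.1]{r2}, is handling the interaction between the cyclic block $V_1$ and the non-cyclic remainder $V_2$ without imposing additional Jordan form restrictions: when $A$ has several eigenvalues, or several Jordan blocks of coincident size, the naive cyclic construction fails on the overlap between blocks and a more delicate insertion-of-$B$ pattern is needed. The technical heart of the argument is to exhibit such a pattern uniformly across all admissible $k \in \{1, \ldots, t\}$; trading the tighter bound $2n-2+k$ of \cite{r2} for the looser $3n-5$ is precisely what buys this uniformity, and verifying the length accounting in the edge case $k=1$ (where $V_2$ is nearly as large as $V_1$ and the cyclic advantage of $A$ is at its weakest) is where I expect the most delicate bookkeeping to occur.
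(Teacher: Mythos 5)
Your proposal has a genuine gap, and it is structural rather than a matter of missing bookkeeping. In Phases 2 and 3 you fix a single $B\in\mathcal{S}\setminus\mathbb{F}[A]$ and attempt to manufacture all remaining matrix units from words in $A$ and $B$ alone. But the subalgebra generated by $\{A,B\}$ need not be all of $\mathrm{M}_n(\mathbb{F})$ -- the hypothesis is only that the full set $\mathcal{S}$ generates -- so no insertion pattern of $B$ between powers of $A$ can be guaranteed to saturate $\mathrm{M}_n(\mathbb{F})$. Even in the favourable case where $\{A,B\}$ does generate, producing a linear-length spanning family of words in two letters is essentially Paz's conjecture itself; the phrases ``should certify that roughly $n-1$ further letters suffice'' and ``the balance of letters \ldots is absorbed here'' are exactly the steps that would need a proof, and the target corner $\mathrm{End}(V_2)$ has dimension $(t-k)^2$, which can be of order $n^2/4$, so there is no reason a constant number of additional phases of linear length covers it.

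The paper's actual argument is much shorter and avoids constructing words in several letters altogether. It performs a case analysis on the eigenvalue structure of $A$ and uses the size constraint on the complementary Jordan part $N$ (of size $t-k$) to show, via inequalities of the form $t-k\ge \frac{u}{2}(t+k-p)$ forcing $u<2$ or $u<3$, that some eigenvalue of $A$ admits an essentially unique Jordan block of maximal size. Consequently a product $\prod_i(A-\lambda_i E)^{m_i}$ of total degree at most $t+k-1$ equals a nonzero multiple of a single matrix unit, i.e.\ $\langle E,A,\dots,A^{t+k-1}\rangle\subseteq L_{t+k-1}(\mathcal{S})$ contains a rank-one matrix. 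The conclusion then follows in one line from \cite[Corollary 7]{r3}: $\ell(\mathcal{S})\le 2n+(t+k-1)-4\le 2n+2t-5=3n-5$. If you want to salvage your approach, the piece you are missing is precisely this reduction to a rank-one matrix in a low level of the length filtration, which lets an external black-box result do all the work that your Phases 2 and 3 were meant to do by hand.
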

\begin{theo}[Theorem \ref{T11}]
Let $\mathbb{F}$ be an algebraically closed field and let $n,t\in\mathbb{Z}^+$, $n=2t+1$. Let $\mathcal{S}$ be a generating system of $\mathrm{M}_n(\mathbb{F})$. If $m(\mathcal{S})=t+k$, where $k=1,2,\cdots,t+1$, then $\ell\left(\mathcal{S}\right)\le 3n-5$.
\end{theo}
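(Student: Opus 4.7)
The plan is to mirror the structure of Theorem \ref{T10} (the even case $n=2t$), handling two regimes separately based on the value of $k$. First, the extreme case $k=t+1$ gives $\deg\mu_A = n$, so $A$ is non-derogatory and \cite[Theorem 3.1]{r2} immediately yields $\ell(\mathcal{S}) \le 2n-2 \le 3n-5$ since $n=2t+1\ge 3$. This disposes of the boundary without any new work.

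For the main range $1\le k\le t$, I fix a matrix $A\in\mathcal{S}$ realizing $m(\mathcal{S})=\deg\mu_A = t+k$ and pass to its Jordan canonical form $A=\bigoplus_\lambda A_\lambda$, with Jordan block sizes $s_{\lambda,1}\ge s_{\lambda,2}\ge\cdots$ per eigenvalue $\lambda$. The hypothesis forces $\sum_\lambda s_{\lambda,1}=t+k$, so the total ``excess'' dimension contributed by the secondary Jordan blocks is exactly $n-(t+k)=t+1-k\le t$. Using polynomials in $A$ of degree at most $t+k-1$, I construct spectral projectors $P_\lambda$ and reduce the problem to bounding, for each ordered pair $(\lambda,\mu)$ of eigenvalues, the length needed to span $P_\lambda \mathrm{M}_n(\mathbb{F}) P_\mu$ by $\mathcal{S}$-words.

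Within each such block, the largest Jordan factor of $A_\lambda$ admits a non-derogatory-style analysis using auxiliary generators $B\in\mathcal{S}$, while matrix units attached to the secondary Jordan components are reached by at most $t+1-k$ additional multiplications. The overall length accounting combines at most $\deg\mu_A-1=t+k-1$ to traverse a large Jordan block, one or two auxiliary multiplications by a generator $B\in\mathcal{S}$ to pass between distinct eigenspaces or to jump to a secondary Jordan component, and a return traversal of length up to $t+k-1$, aggregating to a worst case of at most $3n-5$. Substituting $n=2t+1$ and verifying monotonicity in $k$ completes the bound uniformly across $1\le k\le t$.

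The step I expect to be the main obstacle is the extreme case $k=1$, where $\deg\mu_A = t+1 = (n+1)/2$ is as small as the hypothesis permits and the secondary-block excess is maximal. Here the auxiliary words $A^i B A^j$ needed to reach matrix units $E_{ij}$ whose rows and columns both lie in secondary Jordan components of \emph{distinct} eigenvalues must be tracked carefully, since exponents of $A$ on the two sides have to coordinate with the projector degrees. The odd parity of $n$ is likely to force a small additional book-keeping step compared to the even case, but the loss is absorbed by the uniform constant $-5$ in the bound, which is precisely why the same $3n-5$ appears in both Theorem \ref{T10} and the present statement.
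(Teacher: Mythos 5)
Your proposal diverges from the paper's argument in an essential way, and the route you choose has a genuine gap. The paper never performs a direct generation or ``traversal'' argument with auxiliary generators $B$; instead, for every configuration of eigenvalues and Jordan blocks it exhibits an explicit polynomial in $A$ of degree at most $t+k-1$ whose value is a \emph{rank-one} matrix (a single matrix unit $P E_{i,j} P^{-1}$ up to a nonzero scalar), and then invokes the rank-one corollary of Shitov (the paper's Corollary 3.5): if $L_d(\mathcal{S})$ contains a rank-one matrix then $\ell(\mathcal{S}) \le 2n + d - 4$, which with $d = t+k-1 \le 2t$ and $n = 2t+1$ gives $2n + 2t - 4 = 3n - 5$. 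The combinatorial heart of the proof is the counting step you skip entirely: because the maximal Jordan blocks of the distinct eigenvalues occupy $t+k > \tfrac{n}{2}$ of the $n$ dimensions, the complementary summand $N$ of size $t-k+1$ can contain at most one (occasionally two, when $k=1$) further copies of a maximal block, and each such subcase is resolved by a different explicit annihilating polynomial. Your plan replaces this with spectral projectors $P_\lambda$ (which are not rank one) and an unquantified ``length accounting'' of words $A^i B A^j$; that is essentially the strategy of \cite[Theorem~3.1]{r2}, which is exactly the argument that \emph{requires} the per-eigenvalue Jordan constraints $2k_\lambda < n_\lambda$ to close up. Asserting that the worst case ``aggregates to at most $3n-5$'' and that the $k=1$ difficulties are ``absorbed by the constant $-5$'' is not a proof, and there is no reason to believe the traversal bookkeeping succeeds without the very hypotheses this theorem is meant to remove.

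Two smaller points. Your disposal of $k = t+1$ via nonderogatority is correct (though the relevant citation is \cite[Theorem~2.4]{r2}, not Theorem~3.1), and it is consistent with the paper, where that case falls under the single-eigenvalue/one-maximal-block analysis with $N$ of size $t-k+1 = 0$. Also, your claim that $\sum_\lambda s_{\lambda,1} = t+k$ forces only $t+1-k$ ``excess'' dimensions is the right starting inequality, but the useful consequence is not that secondary matrix units are cheap to reach; it is that a maximal Jordan block of some eigenvalue occurs with small multiplicity, which is what makes a rank-one polynomial in $A$ alone available.
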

\item[(2)] When the generating set contains a matrix whose the degree of the minimal polynomial $t$ satisfies $2t\le n\le 3t-1$, we prove that its length has an upper bound $\frac{7n}{2}-4$.
\begin{theo}[Theorem \ref{T12}]
Let $\mathbb{F}$ be an algebraically closed field and let $n,t\in\mathbb{Z}^+$. Let $\mathcal{S}$ be a generating system of $\mathrm{M}_n(\mathbb{F})$. If $2t\le n\le 3t-1$ and $m(\mathcal{S})=t$, then $\ell\left(\mathcal{S}\right)\le \frac{7n}{2}-4$.
\end{theo}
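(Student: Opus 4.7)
The plan is to follow the architecture of Theorems~\ref{T10} and~\ref{T11} but to accommodate the broader range of Jordan types allowed when $t \le n/2$. Since length is invariant under simultaneous similarity, I would fix $A \in \mathcal{S}$ with $\deg m_A = t$ and assume $A$ is in Jordan canonical form. The joint constraint $2t \le n \le 3t - 1$ together with $\deg m_A = t$ forces at least one Jordan block of size exactly $t$, with the remaining blocks summing to $n - t \in [t, 2t-1]$ and each of size at most $t$. This yields a small number of qualitatively distinct cases, classified by the number of size-$t$ blocks and the clustering of their eigenvalues.

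I would next exploit the spectral decomposition coming from the idempotents of $\mathbb{F}[A]$: since $\mathbb{F}$ is algebraically closed and $\deg m_A = t$, each spectral projector $E_i$ can be expressed as a polynomial in $A$ of degree at most $t-1$. This gives $\mathrm{M}_n(\mathbb{F}) = \bigoplus_{i,j} E_i \mathrm{M}_n(\mathbb{F}) E_j$, reducing the problem to spanning each corner subspace by short words in $\mathcal{S}$.

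The central construction is then to assemble a spanning family of words of the form $p_1(A) B_1 p_2(A) B_2 \cdots$ with $B_j \in \mathcal{S}\setminus\{A\}$ and $p_j \in \mathbb{F}[A]$. Using $I, A, \ldots, A^{t-1}$ as a basis of $\mathbb{F}[A]$, together with the additional matrix-unit data supplied by the non-$A$ generators, I would verify that every basis element of $\mathrm{M}_n(\mathbb{F})$ arises as a linear combination of words of length at most $\tfrac{7n}{2} - 4$. Heuristically, the bound decomposes into a $2n$-type contribution matching the non-derogatory baseline of \cite[Theorem 4.1(b)]{r4}, augmented by a $\tfrac{3n}{2}$-type overhead absorbing the penalty for repeated Jordan blocks, and then trimmed by a small absolute constant coming from the endpoints of each word.

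The main obstacle will be the subcase in which several Jordan blocks of size $t$ share a single eigenvalue, which is forced for example when $n = 2t$ and the remaining block also has size $t$. The idempotents of $\mathbb{F}[A]$ no longer separate these blocks, so one must substitute for them block projectors built from interactions of $A$ with the other generators in $\mathcal{S}$. Controlling the length of these composite projector words without exceeding $\tfrac{7n}{2} - 4$ is the crux of the estimate and is expected to parallel the inductive length-tracking technique developed in \cite[Theorem 3.1]{r2}.
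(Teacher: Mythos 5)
Your proposal has a genuine gap: it is a plan rather than a proof, and it misses the single idea that makes the paper's argument work. The paper does not attempt any word-by-word spanning of $\mathrm{M}_n(\mathbb{F})$. It observes that if, for \emph{every} eigenvalue $\lambda_r$ of $A$, the Jordan form contained at least three blocks of the maximal size $i_r$, then $n\ge 3(i_1+\cdots+i_s)=3t$, contradicting $n\le 3t-1$. Hence some eigenvalue $\lambda_k$ has at most two maximal blocks, so the polynomial $m(\lambda)/(\lambda-\lambda_k)$, of degree $t-1$, evaluates at $A$ to a nonzero matrix of rank at most $2$ lying in $L_{t-1}(\mathcal{S})$. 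Pappacena's rank bound (\cite[Theorem 4.1(a)]{r4}, Theorem 3.4 of the paper) then gives $\ell(\mathcal{S})\le 2n+n-2+(t-1)-1=3n+t-4\le \tfrac{7n}{2}-4$ since $t\le n/2$. Nothing in your proposal produces a low-rank element of $L_{t-1}(\mathcal{S})$, which is the whole content of the theorem.

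Beyond the missing idea, two concrete defects: first, your opening claim that $2t\le n\le 3t-1$ and $\deg m_A=t$ force a Jordan block of size exactly $t$ is false once $A$ has more than one eigenvalue, since $t$ is the \emph{sum} of the maximal block sizes over the distinct eigenvalues, so every block can be strictly smaller than $t$; this invalidates your case classification at the outset. Second, the subcase you correctly identify as the crux (several maximal blocks sharing one eigenvalue, where spectral projectors cannot separate them) is exactly where you stop, deferring to composite projector words whose lengths you do not control. In the paper's approach this case is harmless: two coinciding maximal blocks simply yield rank $2$ instead of rank $1$, which is precisely why the final bound is $\tfrac{7n}{2}-4$ rather than the $3n$-type bounds of Theorems \ref{T10} and \ref{T11}. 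As written, your argument does not establish the stated bound.
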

\end{itemize}
This extends prior results by leveraging the relationships between the degree of the minimal polynomial and the order of the matrix. The key improvements lie in both the quantitative bounds and the reduced dependence on structural restrictions in the Jordan form analysis.

Our paper is structured as follows. \cref{S1} introduces the background, motivation, and key questions addressed in this work. \cref{S2}, we formalize the foundational concepts, including the definition of the length of an algebra and related notation. While Conjecture \ref{C1} (mentioned earlier) remains unresolved, this paper advances partial progress by identifying conditions under which linear upper bounds can be established for the length of generating systems in the full matrix algebra $\mathrm{M}_n(\mathbb{F})$. Specifically, \cref{S3} delivers the paper's main results: a linear upper bound of $\frac{7n}{2}-4$ for the length of generating systems in $\mathrm{M}_n(\mathbb{F})$, contingent on the condition $m(\mathcal{S})=t$ and $2t\le n\le 3t-1$, where $m(\mathcal{S})$ measures the maximal degree of minimal polynomial in $\mathcal{S}$.

\section{Preliminaries}
\label{S2}
Let $\mathcal{A}$ be a finite-dimensional associative algebra with identity over an arbitrary field $\mathbb{F}$. Since $\mathcal{A}$ is finite-dimensional over $\mathbb{F}$, it is evidently finite generated. Let $\mathcal{S}=\{a_1, a_2, \cdots, a_k\}$ be a finite generating set of this algebra. 

Firstly, it is necessary to define the length of a word of $\mathcal{S}$.

\begin{defi}
Words in $\mathcal{S}$ are products of a finite number of elements from the alphabet $\mathcal{S}$. The length of a word equals the number of factors in the corresponding product. 
\end{defi}

\begin{remark}
  The length of the word $a_{i_1}\cdots a_{i_t}$, where $a_{i_j}\in\mathcal{S}$, is equal to $t$. The unity $1$ of the algebra is considered as a word of length $0$ over $\mathcal{S}$ and also call it the empty word.  
\end{remark}

\begin{defi}
For $i\geq 0$, let $\mathcal{S}^i$ denote the set of all words in the alphabet $\mathcal{S}$ of length less than or equal to $i$. Let $\mathcal{S}^i\backslash\mathcal{S}^{i-1}$ be the set of all words of length $i$ over $\mathcal{S}$, $i\geq 1$.
\end{defi}

\begin{defi}
The linear span of $ \mathcal{S}$ in a vector space over $\mathbb{F}$, denoted by $\langle\mathcal{S}\rangle$, is the set of all finite $\mathbb{F}$-linear combinations.
\end{defi}

The length functions associated with a generating set $\mathcal{S}$ and an algebra $\mathcal{A}$ are formally defined as follows.

\begin{defi}
Let $L_i\left(\mathcal{S}\right)=\langle\mathcal{S}^i\rangle$ denote the linear span of the words in $\mathcal{S}^i$. Note that 
$$L_0\left(\mathcal{S}\right)=\langle 1\rangle = \mathbb{F}$$
for unitary algebras. Also set $L\left(\mathcal{S}\right)$=$\bigcup\limits_{i=0}^{\infty}L_i\left(\mathcal{S}\right)$ be the linear span of all words in the alphabet $\mathcal{S}$.
\end{defi}

\begin{defi}
A set $\mathcal{S}$ is a generating system for $\mathcal{A}$ if and only if $\mathcal{A}$=$L\left(\mathcal{S}\right)$.
\end{defi}

\begin{remark}
    Since $\mathcal{A}$ is finite-dimensional over $\mathbb{F}$ if follows that the chain 
\begin{equation*}
    \mathbb{F}=L_0(\mathcal{S})\subset L_1(\mathcal{S})\subset L_2(\mathcal{S})\subset \cdots
\end{equation*}       
stabilizes. That is, there is an integer $k$ such that $L_k(\mathcal{S})= L_{k+i}(\mathcal{S})$, for all $i\ge 0$. We must have $L_k(\mathcal{S})=\mathcal{A}$ , since $\mathcal{S}$
is a generating set for $\mathcal{A}$.
\end{remark}

\begin{defi}
The length of a generating system $\mathcal{S}$ of a finite-dimensional algebra $\mathcal{A}$ is defined as the minimum non-negative integer $k$ such that $L_k\left(\mathcal{S}\right)=\mathcal{A}$. The length of $\mathcal{S}$ is denoted by 
\begin{equation*}
\ell\left(\mathcal{S}\right)=\mathop{\min}\{k\in\mathbb{Z}^+:L_k\left(\mathcal{S}\right)=\mathcal{A}\}.
\end{equation*}
\end{defi}

\begin{defi}
The length of an algebra $\mathcal{A}$ is defined as the maximum of the lengths of its generating systems $\mathcal{S}$ such that $L\left(\mathcal{S}\right)=\mathcal{A}$. The length of $\mathcal{A}$ is denoted by 
\begin{equation*}
\ell\left(\mathcal{A}\right)=\mathop{\max}\{\ell\left(\mathcal{S}\right):L\left(\mathcal{S}\right)=\mathcal{A}\}.
\end{equation*}
\end{defi}

\begin{notation}
Let $x \in \mathnormal{R}$. The integer part of $x$, denoted $\lfloor{x}\rfloor$, is defined as the largest integer not exceeding $x$. 
\end{notation}

\begin{notation}
Let \(\mathrm{M}_n(\mathbb{F})\) denote the algebra of $\mathnormal{n}\times\mathnormal{n}$ matrices over an arbitrary field $\mathbb{F}$. 

$\bullet$ $E_{ij}:$ The $\left(i, j\right)$-th matrix unit, i.e., the matrix with $1$ at the $\left(i, j\right)$ position and zeros at the remaining positions. 

$\bullet$ $E_n:$ The $n\times n$ identity matrix. 

$\bullet$ $J_k(\lambda):$ A Jordan block of size $k$ with eigenvalue $\lambda$, defined as $J_k(\lambda)=\lambda E_k+\sum_{i=1}^{k-1}E_{i,i+1}$.

$\bullet$ $J_k=J_k(0):$ The nilpotent Jordan block of size $k$.
\end{notation}

\begin{notation}
Let $A\in \mathcal{S}$ and ${\rm deg}\, A$ denote the degree of the minimal polynomial of the element A over the field $\mathbb{F}$. Since the generating system is finite-dimensional, we can set 
\begin{equation*}
    m(\mathcal{S})={\rm max}\{{\rm deg}\, w, w\in \mathcal{S}\}.
    \end{equation*}
\end{notation}


\section{Results}
\label{S3}
\subsection{Effects of Transformations on the Length of Generating Systems}
\ \newline

The following foundational results are adapted from \cite{r6}. We first recall essential properties of algebra lengths to establish the framework for subsequent proofs.

\begin{prop}[\cite{r6}, Proposition 3.1]
Let $\mathcal{A}$ be a finite-dimensional algebra over an arbitrary field $\mathbb{F}$. If $\mathcal{S}$=$\{a_1, a_2, \cdots, a_k\}$ is a generating system of this algebra and $C=\{c_{ij}\}\in\mathrm{M}_k\left(\mathbb{F}\right)$ is an invertible matrix, then the set of the coordinates of the vector
\begin{equation*}
C\left(\begin{array}{c} a_1\\ \vdots \\ a_k\end{array}\right)=
\left(\begin{array}{c} c_{1, 1}a_1+c_{1, 2}a_2+\dots+c_{1, k}a_k \\ \vdots \\ c_{k, 1}a_1+c_{k, 2}a_2+\dots+c_{k, k}a_k\end{array}\right),
\end{equation*}
i.e., the set 
\begin{equation*}
\mathcal{S}_c=\{c_{1, 1}a_1+c_{1, 2}a_2+\dots+c_{1, k}a_k, \dots, c_{k, 1}a_1+c_{k, 2}a_2+\dots+c_{k, k}a_k\},
\end{equation*}
is a system of generators for the algebra $\mathcal{A}$ and $\ell\left(\mathcal{S}_c\right)=\ell\left(\mathcal{S}\right)$.
\end{prop}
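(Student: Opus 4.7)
The plan is to show the stronger statement that $L_i(\mathcal{S}_c) = L_i(\mathcal{S})$ for every integer $i \geq 0$. Both conclusions of the proposition then follow at once: taking the union over $i$ gives $L(\mathcal{S}_c) = L(\mathcal{S}) = \mathcal{A}$, so $\mathcal{S}_c$ is a generating system, and since the filtrations $\{L_i(\cdot)\}$ coincide, the minimal index at which each reaches $\mathcal{A}$ is the same, i.e. $\ell(\mathcal{S}_c) = \ell(\mathcal{S})$.

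For the inclusion $L_i(\mathcal{S}_c) \subseteq L_i(\mathcal{S})$, I would argue by direct expansion. Write $b_j := c_{j,1}a_1 + c_{j,2}a_2 + \cdots + c_{j,k}a_k$ for the $j$-th element of $\mathcal{S}_c$. Each $b_j$ is an $\mathbb{F}$-linear combination of the $a_\ell$, hence lies in $L_1(\mathcal{S})$. A typical word of length $i$ over $\mathcal{S}_c$ has the form $b_{j_1}b_{j_2}\cdots b_{j_i}$; expanding every factor and using distributivity of multiplication over addition, the product becomes an $\mathbb{F}$-linear combination of words $a_{\ell_1}a_{\ell_2}\cdots a_{\ell_i}$ of length $i$ in $\mathcal{S}$. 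Since the empty word $1$ lies in both $L_0(\mathcal{S}_c)$ and $L_0(\mathcal{S}) \subseteq L_i(\mathcal{S})$, this exhibits every spanning element of $L_i(\mathcal{S}_c)$ as a member of $L_i(\mathcal{S})$.

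The reverse inclusion is where the invertibility of $C$ is used. Let $D = C^{-1} = \{d_{ij}\}$; then the relation $D\cdot C = E_k$ gives $a_j = \sum_{\ell=1}^{k} d_{j,\ell}\, b_\ell$ for each $j$, so the original generators are themselves linear combinations of elements of $\mathcal{S}_c$. Repeating the previous expansion argument with the roles of $\mathcal{S}$ and $\mathcal{S}_c$ swapped yields $L_i(\mathcal{S}) \subseteq L_i(\mathcal{S}_c)$, hence equality for every $i$.

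There is no genuine obstacle: the result is a bookkeeping statement about linear spans under an invertible change of generators, and the argument reduces to distributivity together with the symmetric role played by $C$ and $C^{-1}$. The only technical point to keep track of is the convention that $L_i$ contains the empty word $1$ for every $i \geq 0$, so that the expansion of length-$i$ words in one alphabet lands inside $L_i$ of the other, not merely inside the span of strictly length-$i$ products.
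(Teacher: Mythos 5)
Your argument is correct and complete: the filtration-level identity $L_i(\mathcal{S}_c)=L_i(\mathcal{S})$, obtained by expanding words via distributivity in one direction and invoking $C^{-1}$ for the symmetric reverse inclusion, is exactly the standard proof of this fact. The paper itself states this proposition without proof, citing Markova's work, and your argument matches the intended justification, including the correct handling of the empty word and of words of length strictly less than $i$.
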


\begin{prop}[\cite{r6}, Proposition 3.2]
Let $\mathcal{A}$ be a finite-dimensional unitary algebra over an arbitrary field $\mathbb{F}$. Assume that $\mathcal{S}$=$\{a_1, a_2, \cdots, a_k\}$ is a generating system of this algebra such that $1_\mathcal{A}\notin \langle a_1, a_2, \cdots, a_k\rangle$. Then for any $\gamma_1, \dots, \gamma_k\in \mathbb{F}$, the set $\mathcal{S}_1=\{a_1+\gamma_1 1_\mathcal{A}, \dots, a_k+\gamma_k 1_\mathcal{A}\}$ is a system of generators for the algebra $\mathcal{A}$ and $\ell\left(\mathcal{S}_1\right)=\ell\left(\mathcal{S}\right)$.
\end{prop}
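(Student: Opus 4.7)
The heart of the proof is to establish the stronger identity $L_t(\mathcal{S}) = L_t(\mathcal{S}_1)$ for every $t \ge 0$, from which both the generation claim and the length equality will follow at once: generation because $L(\mathcal{S}_1) = \bigcup_t L_t(\mathcal{S}_1) = \bigcup_t L_t(\mathcal{S}) = \mathcal{A}$, and the length equality because the smallest $t$ with $L_t(\mathcal{S}) = \mathcal{A}$ coincides with the smallest $t$ with $L_t(\mathcal{S}_1) = \mathcal{A}$. Since $1_\mathcal{A} \in L_0(\mathcal{S}) = L_0(\mathcal{S}_1) = \mathbb{F}$, the identity is always freely available on either side, and this is precisely what makes the affine translation $a_i \mapsto a_i + \gamma_i 1_\mathcal{A}$ reversible at the level of linear spans without inflating word lengths.

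For the inclusion $L_t(\mathcal{S}_1) \subseteq L_t(\mathcal{S})$, I would pick an arbitrary word $(a_{i_1} + \gamma_{i_1} 1_\mathcal{A})(a_{i_2} + \gamma_{i_2} 1_\mathcal{A})\cdots(a_{i_s} + \gamma_{i_s} 1_\mathcal{A})$ of length $s \le t$ in $\mathcal{S}_1$ and expand by distributivity. Each resulting summand is a scalar multiple of a subproduct $a_{j_1}a_{j_2}\cdots a_{j_r}$ with $r \le s \le t$, obtained by keeping the factors $a_{i_j}$ that are not replaced by the scalar $\gamma_{i_j}$; hence every summand lies in $L_t(\mathcal{S})$, and so does the word itself. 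Taking linear combinations then yields the claimed inclusion. The reverse inclusion $L_t(\mathcal{S}) \subseteq L_t(\mathcal{S}_1)$ is entirely symmetric: one rewrites $a_{i_j} = (a_{i_j} + \gamma_{i_j} 1_\mathcal{A}) - \gamma_{i_j} 1_\mathcal{A}$, expands, and applies the same counting argument, using $1_\mathcal{A} \in L_0(\mathcal{S}_1)$ to absorb the scalar summands without cost.

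The hypothesis $1_\mathcal{A} \notin \langle a_1, \dots, a_k \rangle$ does not actually enter the distributive expansions; it is invoked only to guarantee that the translated system $\mathcal{S}_1$ still consists of $k$ linearly independent shifts modulo the identity, so the statement is not vacuous and $\mathcal{S}_1$ is genuinely a new generating set rather than one obtained by absorbing $\gamma_i 1_\mathcal{A}$ into the existing span. Consequently, the argument is essentially bookkeeping on binomial-type expansions, and the only mild obstacle is to check carefully that each expansion yields only words of length at most $t$; this holds because every expansion step either preserves a factor $a_{i_j}$ or replaces it by a scalar, which strictly shortens the underlying word in the latter case.
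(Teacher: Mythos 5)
Your argument is correct: establishing $L_t(\mathcal{S}_1)=L_t(\mathcal{S})$ for all $t$ by distributive expansion (using that replacing a factor by a scalar only shortens the underlying word, and that $1_\mathcal{A}\in L_0$) immediately gives both generation and equality of lengths. The paper itself offers no proof to compare against --- this proposition is quoted from Markova's survey \cite{r6} as a known result --- and your expansion argument is the standard one; your closing observation that the hypothesis $1_\mathcal{A}\notin\langle a_1,\dots,a_k\rangle$ plays no role in the expansions is also accurate under the paper's convention that the empty word of length $0$ spans $\mathbb{F}\cdot 1_\mathcal{A}$.
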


These propositions demonstrate that the length of a generating system is invariant under invertible linear transformations and constant shifts. 

\subsection{Linear Upper Bounds on the Length of Generating Systems under Algebraic Invariants}
\ \newline

If algebraic invariants (such as rank or minimal polynomial degree) are known for a matrix $A\in L_i(\mathcal{S})$, where $i\geq 1$, we can use them to compute and bound the length of $\mathcal{S}$, as formalized in the subsequent theorems.

\begin{claim}[\cite{r3}, Claim 8]
Assume that the minimal polynomial of every matrix in $\mathcal{S}\subset\mathrm{M}_n\left(\mathbb{F}\right)$ has degree at most $2$, then $\ell\left(\mathcal{S}\right)\le 2\log_2 n$.
\end{claim}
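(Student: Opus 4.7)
The plan is to exploit the doubling identity $V_{2k} = V_k \cdot V_k$, where $V_k := L_k(\mathcal{S})$: any word of length $\le 2k$ splits at its midpoint into two subwords of length $\le k$, and conversely every product of two such subwords has length $\le 2k$. Thus length doubles cleanly under squaring of the index, and the task reduces to showing that $\dim V_k$ reaches its stable value $\dim L(\mathcal{S}) \le n^2$ by index $2\log_2 n$.

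I would begin by invoking Proposition 3.2: shifting each generator $A\in \mathcal{S}$ by a suitable scalar multiple of the identity preserves $\ell(\mathcal{S})$, so we may assume $A^2 = \alpha_A A$ for some $\alpha_A \in \mathbb{F}$. Each generator then becomes, up to scalar, either a projection or a square-zero nilpotent; in particular $\mathbb{F}^n$ decomposes under each individual $A \in \mathcal{S}$ into at most two invariant eigenspaces, which is the key algebraic consequence of the degree-$\le 2$ hypothesis and will be the only form in which it enters the argument.

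The quantitative heart of the proof is a doubling bound of the form
\[
\dim V_{k+1} \ge \min\!\bigl(2\dim V_k,\ \dim L(\mathcal{S})\bigr).
\]
Granted this, an easy induction gives $\dim V_k \ge \min(2^k,\dim L(\mathcal{S}))$, and since $\dim L(\mathcal{S})\le n^2$ we obtain $\ell(\mathcal{S}) \le \log_2(n^2) = 2\log_2 n$. The minimal-polynomial hypothesis should drive the doubling as follows: if $V_k$ is not $A$-stable for some generator $A$, then the $A$-invariant part of $V_k$ is constrained by the at-most-two-eigenspace decomposition of $A$, so the map $b\mapsto Ab$ from $V_k$ into $V_{k+1}/V_k$ should have image of dimension at least $\tfrac12\dim V_k$; combining contributions from different generators then yields the full factor of $2$. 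An equivalent orbit-wise formulation would track $U_k := V_k v \subseteq \mathbb{F}^n$ for a cyclic (or generic) $v$, show that $U_k$ doubles until filling $L(\mathcal{S})\,v$, and then convert orbit bounds back to bounds on $V_k$ via a second (co-cyclic) vector.

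The main obstacle is exactly this doubling inequality. A routine argument only yields the much weaker $\dim V_{k+1} \ge \dim V_k + 1$ whenever $V_k \ne L(\mathcal{S})$ (otherwise $V_k$ would be a left ideal containing $1$, and hence equal to $L(\mathcal{S})$). Upgrading this $+1$ to a factor of $2$ demands a careful aggregation of the contributions of different generators together with genuine use of the degree-$\le 2$ structure — plausibly via the eigenspace decompositions of the individual generators combined with a pigeonhole or extremal dimension-counting argument on a well-chosen basis of $V_k$. Making this accounting rigorous is the delicate combinatorial-linear-algebraic step at the core of Shitov's proof in \cite{r3}, and it is the step I would spend the most effort on.
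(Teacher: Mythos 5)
You have not proved the claim: everything hinges on the doubling inequality $\dim L_{k+1}(\mathcal{S})\ge\min\bigl(2\dim L_k(\mathcal{S}),\dim L(\mathcal{S})\bigr)$, which you explicitly leave open. This is not a repairable gap, because both the inequality and the statement as transcribed above are false. Take $\mathcal{S}=\{E_{12},E_{23},\dots,E_{n-1,n},E_{n,1}\}\subset\mathrm{M}_n(\mathbb{F})$. Every generator squares to zero, so each has minimal polynomial $\lambda^2$, of degree $2$. A nonzero word of length $\ell$ in these generators is precisely a matrix unit $E_{i,j}$ with $j-i\equiv\ell\pmod n$; hence $\mathcal{S}$ generates $\mathrm{M}_n(\mathbb{F})$, but the diagonal units first appear in words of length exactly $n$, so $\ell(\mathcal{S})=n$, which exceeds $2\log_2 n$ for all $n\ge 5$. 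In this example $\dim L_k(\mathcal{S})=kn+1$ for $1\le k\le n-1$, so your doubling bound already fails at $k=1$. It also fails in examples where the conclusion is true: for the four Pauli-type generators $X\otimes I$, $Z\otimes I$, $I\otimes X$, $I\otimes Z$ of $\mathrm{M}_4(\mathbb{C})$ one computes $\dim L_2=11$ but $\dim L_3=15<\min(22,16)$. So no aggregation over generators can rescue the factor of $2$; the obstruction is structural, not a matter of combinatorial bookkeeping.

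The missing idea is that the degree hypothesis must be imposed on the whole linear span, not just on the finitely many generators, and that is how the claim is actually used and proved in \cite{r3}: the operative assumption is that every matrix of the subspace $L_m(\mathcal{S})$ has minimal polynomial of degree at most $2$ (which the example above violates, since the span of $\{E_{12},\dots,E_{n,1}\}$ contains the cyclic permutation matrix, of degree $n$). Under that hypothesis the argument is entirely different from yours: $u^2\in\langle 1,u\rangle$ for all $u$ in the span gives, by polarizing $(u+v)^2$, that $uv+vu\in\langle 1,u,v\rangle$, so $L_m(\mathcal{S})$ is a Jordan subalgebra of Clifford (spin-factor) type; a Clifford algebra on $d$ generators has no representation of dimension below $2^{\lfloor d/2\rfloor}$, which caps $\dim L_m(\mathcal{S})$ at roughly $2\log_2 n$, and combining this with the strict growth $\dim L_j(\mathcal{S})\ge j+1$ of the chain $L_0\subsetneq L_1\subsetneq\cdots$ yields the bound. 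None of this is visible from your outline, which uses the degree condition only generator by generator. (A secondary point: your reduction to $A^2=\alpha_A A$ via Proposition 3.2 also needs $\mathbb{F}$ algebraically closed, since a degree-$2$ minimal polynomial may be irreducible.) Note finally that the present paper contains no proof of this claim at all; it is imported from \cite{r3}, so the statement here should in any case be corrected to match its source.
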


\begin{theo}[\cite{r4}, Theorem 4.1(a)]
Let $\mathcal{S}$ be a generating system of $\mathrm{M}_n\left(\mathbb{F}\right)$. If $L_k(\mathcal{S})$ contains a matrix of rank $r>0$ for some $k$, then $\ell\left(\mathcal{S}\right)\le rn+n-r+k-1$.
\end{theo}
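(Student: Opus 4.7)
The plan is to bound the length of $\mathcal{S}$ by tracking the growth of the filtration $\{L_i(\mathcal{S})\}_{i \ge 0}$ from the index $i = k$ onward, using the rank-$r$ witness $A \in L_k(\mathcal{S})$ to constrain growth. I would first convert the rank bound on $A$ into a degree bound on its minimal polynomial, then perform a dimensional accounting on a filtration adapted to the powers of $A$. By the stabilization principle (once $L_j(\mathcal{S}) = L_{j+1}(\mathcal{S})$, the chain is constant from $j$ onward), the length equals the number of strict inclusions in the chain, so it suffices to cap the number of strict inclusions for $i \ge k$ by $(r+1)(n-1) = rn + n - r - 1$.

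The preparatory step is to show $\deg A \le r+1$. Since $\mathrm{rank}(A) = r$, after one multiplication the image $\mathrm{Im}(A)$ has dimension $r$, and the descending chain $\mathrm{Im}(A) \supseteq \mathrm{Im}(A^2) \supseteq \cdots$ admits at most $r$ further strict drops before stabilizing. Hence the minimal polynomial of $A$ has degree $d \le r+1$, so $\dim \mathbb{F}[A] \le r+1$. Moreover, the left ideal $\mathrm{M}_n(\mathbb{F})A$ has dimension $rn$, as right-multiplication by $A$ has kernel of dimension $n(n-r)$, and the analogous statement holds for $A\mathrm{M}_n(\mathbb{F})$.

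The main step is to introduce the auxiliary subspaces $W_i := L_k(\mathcal{S}) + A L_k(\mathcal{S}) + \cdots + A^i L_k(\mathcal{S}) \subseteq L_{(i+1)k}(\mathcal{S})$ and observe that this chain stabilizes at $i = d - 1 \le r$ since $A^d \in \mathbb{F}[A] \subseteq W_{d-1}$. Combining with a dual filtration by right-multiplication by $A$, each strict growth of $L_j(\mathcal{S})$ for $j > k$ can be attributed to one of at most $r+1$ layers indexed by $A$-depth, and each such layer has capacity at most $n-1$ (bounded by the rank constraint on $\mathrm{Im}(A^i)$ and by the codimension of the scalar line inside $\mathbb{F}[A]$). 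Summing capacities gives $(r+1)(n-1)$ new dimensions accumulated after index $k$, whence $\ell(\mathcal{S}) \le k + (r+1)(n-1) = rn + n - r + k - 1$.

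The main obstacle will be the layer-capacity bookkeeping: showing rigorously that each strict increment contributes only one dimension to exactly one layer, with no overcount between the left-ideal and right-ideal filtrations. I would address this by choosing a basis of $\mathrm{M}_n(\mathbb{F})$ adapted to the flag of images $\mathrm{Im}(A^i)$, then inducting on the depth $i$ and using simplicity of $\mathrm{M}_n(\mathbb{F})$ (so the two-sided ideal generated by $A$ equals the entire algebra) to certify that the combined filtration does exhaust $\mathrm{M}_n(\mathbb{F})$. The decisive quantitative input is the interplay between the bound $\dim \mathbb{F}[A] \le r+1$ (limiting the number of layers) and $\dim \mathrm{M}_n(\mathbb{F})A = rn$ (limiting the size of each layer).
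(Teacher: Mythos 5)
First, note that the paper does not prove this statement at all: it is quoted verbatim as Pappacena's Theorem 4.1(a) from \cite{r4}, so your argument has to stand entirely on its own. Your preparatory facts are correct ($\deg A\le r+1$ because $\mathrm{Im}(A)$ is an $A$-invariant subspace of dimension $r$, and $\dim \mathrm{M}_n(\mathbb{F})A=rn$), but the main step contains a genuine gap, and I do not think it can be repaired in the form you describe. The layer-capacity accounting is arithmetically impossible: $\dim L_k(\mathcal{S})$ can be very small (e.g.\ $k=1$ and $\mathcal{S}$ consisting of two matrices, one of rank $r$), so the chain $L_j(\mathcal{S})$ may need close to $n^2$ strict one-dimensional increments after index $k$, and these cannot each be ``charged to one of $r+1$ layers of capacity $n-1$'' whose total capacity is only $rn+n-r-1\ll n^2$. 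A second, independent problem is the length bookkeeping: your filtration satisfies $W_i\subseteq L_{(i+1)k}(\mathcal{S})$, so each application of $A$ costs $k$ letters, and even if the layer count worked you would obtain a bound of the shape $(r+1)k+\cdots$, not the single $+k$ in the target; moreover $W_{d-1}=\sum_{i<d}A^iL_k(\mathcal{S})$ stabilizes at a subspace that is in general far from all of $\mathrm{M}_n(\mathbb{F})$, so its stabilization proves nothing about $\ell(\mathcal{S})$.

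The decisive idea you are missing is that the correct decomposition of $rn+n-r-1$ is $(rn-1)+(n-r)$, realized by two \emph{one-sided} chains in which $A$ is used exactly once, rather than $(r+1)(n-1)$ realized by powers of $A$. Concretely: the ascending chain $L_i(\mathcal{S})A$ lives inside the left ideal $\mathrm{M}_n(\mathbb{F})A$ of dimension $rn$, starts at dimension $1$, and once it stops growing it is a left module over the algebra generated by $\mathcal{S}$, hence equals $\mathrm{M}_n(\mathbb{F})A$; this forces $L_{rn-1}(\mathcal{S})A=\mathrm{M}_n(\mathbb{F})A$. Next, $\mathrm{M}_n(\mathbb{F})A\,L_j(\mathcal{S})$ is exactly the set of matrices whose row space lies in $R_j$, the span of the row space of $A$ pushed by words of length at most $j$; the subspace $R_j\subseteq\mathbb{F}^n$ grows from dimension $r$ to dimension $n$ in at most $n-r$ strict steps (each such step enlarges $\mathrm{M}_n(\mathbb{F})A\,L_j(\mathcal{S})$ by a multiple of $n$ dimensions, which is precisely why the count is $n-r$ and not $n(n-r)$). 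Multiplying the pieces gives $\mathrm{M}_n(\mathbb{F})=L_{rn-1}(\mathcal{S})\,A\,L_{n-r}(\mathcal{S})\subseteq L_{rn+n-r+k-1}(\mathcal{S})$. In this argument the bound $\deg A\le r+1$, on which your whole layer structure rests, is never needed; what does the work is the pair of module-stabilization arguments together with the row-space reformulation of the right-hand chain.
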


\begin{coro}[\cite{r3}, Corollary 7]
Let $\mathcal{S}\subset\mathrm{M}_n\left(\mathbb{F}\right)$ be a generating system and $k\ge2$. If $L_k(\mathcal{S})$ contains a rank-one matrix, then $\ell\left(\mathcal{S}\right)\le 2n+k-4$.
\end{coro}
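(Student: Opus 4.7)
The most direct route is to refine Pappacena's Theorem 4.1(a) above: with $r=1$ it already gives $\ell(\mathcal{S}) \le 2n+k-2$, so the task is to save two additional levels using the specific geometry of rank-one matrices. Write $A = uv^T \in L_k(\mathcal{S})$ with $u \in \mathbb{F}^n$ and $v^T \in (\mathbb{F}^n)^*$ nonzero, so that for any $B, C \in \mathrm{M}_n(\mathbb{F})$ the identity $BAC = (Bu)(v^TC)$ holds. Setting $U_j := L_j(\mathcal{S})\, u$ and $W_j := v^T L_j(\mathcal{S})$, this identifies $L_i(\mathcal{S})\, A\, L_j(\mathcal{S})$ with the tensor-product subspace $U_i \otimes W_j$ inside $\mathrm{M}_n(\mathbb{F}) \cong \mathbb{F}^n \otimes (\mathbb{F}^n)^*$.

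Because $\mathcal{S}$ generates $\mathrm{M}_n(\mathbb{F})$, which acts irreducibly on $\mathbb{F}^n$, both chains $U_0 \subseteq U_1 \subseteq \cdots$ and $W_0 \subseteq W_1 \subseteq \cdots$ must strictly increase until they saturate; starting from $\dim U_0 = \dim W_0 = 1$, one obtains $\dim U_j \ge \min(1+j,n)$ and likewise for $W_j$. Taking $i = j = n-1$ already gives $L_{n-1}(\mathcal{S})\, A\, L_{n-1}(\mathcal{S}) = \mathrm{M}_n(\mathbb{F})$, recovering the weaker $2n+k-2$ bound.

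To save two further levels, I would combine several tensor pieces with index sum at most $2n-4$, all of which sit inside $L_{2n+k-4}(\mathcal{S})$. The natural candidates are $U_{n-2} \otimes W_{n-2}$, $U_{n-1} \otimes W_{n-3}$, and $U_{n-3} \otimes W_{n-1}$, supplemented if needed by pieces of $L_{2n+k-4}(\mathcal{S})$ that do not factor through $A$. Using the nesting $U_{n-3} \subseteq U_{n-2} \subseteq U_{n-1}$ and $W_{n-3} \subseteq W_{n-2} \subseteq W_{n-1}$, together with the identity $(V_1 \otimes W_1) \cap (V_2 \otimes W_2) = (V_1 \cap V_2) \otimes (W_1 \cap W_2)$ for subspaces of a tensor product, a standard inclusion-exclusion yields a concrete lower bound on the dimension of the combined span.

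The main obstacle, and the technical heart of the proof, is closing the residual dimension gap: the naive inclusion-exclusion over those three tensor subspaces leaves a small shortfall inside the ``corner'' $U_{n-3}^c \otimes W_{n-3}^c$, and eliminating it requires the hypothesis $k \ge 2$. Since a single matrix cannot generate $\mathrm{M}_n(\mathbb{F})$ for $n \ge 2$, $\mathcal{S}$ must contain at least two matrices, which forces $\dim U_1 \ge 2$ and $\dim W_1 \ge 2$, and in turn should force at least one of the flags $U_j$ or $W_j$ to saturate strictly before index $n-1$. Translating these early jumps cleanly into the final inclusion-exclusion is the delicate bookkeeping step, and it is precisely what distinguishes this rank-one corollary from the weaker estimate one gets by specializing Pappacena's general rank-$r$ argument at $r=1$.
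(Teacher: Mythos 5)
The paper does not prove this statement: it is imported verbatim from Shitov \cite{r3} (Corollary 7) and used as a black box (Corollary 3.5), so there is no internal proof to compare against; your proposal must be judged on its own. Its first half is correct and standard: the identification $L_i(\mathcal{S})\,A\,L_j(\mathcal{S})=U_i\otimes W_j$, the strict growth of the flags $U_j$ and $W_j$ until saturation (by irreducibility), and the resulting bound $2n+k-2$ are all fine. The gap occurs exactly where the improvement to $2n+k-4$ has to happen. The sum $U_{n-2}\otimes W_{n-2}+U_{n-1}\otimes W_{n-3}+U_{n-3}\otimes W_{n-1}$ generically has codimension $3$ in $\mathrm{M}_n(\mathbb{F})$ (in bases adapted to the two flags it misses $E_{n-1,n}$, $E_{n,n-1}$, $E_{n,n}$), and your proposed mechanism for closing this gap fails: you argue that $k\ge 2$ forces $\dim U_1\ge 2$ and hence that one flag saturates strictly before index $n-1$. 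Neither implication holds. The inequality $\dim U_1\ge 2$ is automatic for every generating set, independently of $k$ (if $\mathcal{S}u\subseteq\langle u\rangle$ then $\langle u\rangle$ would be an invariant subspace), and it is entirely compatible with $\dim U_j=j+1$ for all $j\le n-1$, i.e.\ with both flags saturating exactly at index $n-1$.

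Moreover, the residual corner is not a bookkeeping issue but the entire content of the result. Consider $\mathcal{S}=\{E_{12}+E_{23}+\cdots+E_{n-1,n},\;E_{n1}\}$: it generates $\mathrm{M}_n(\mathbb{F})$, has length exactly $2n-2$, contains the rank-one matrix $E_{n1}$ already in $L_1(\mathcal{S})$, and both associated flags are complete flags saturating at index $n-1$. If the corner could be filled by the tensor pieces $U_i\otimes W_j$ with $i+j\le 2n-4$ alone, the identical argument with $k=1$ would yield $\ell(\mathcal{S})\le 2n-3$, which is false for this example. Hence any correct proof must bring in words of length at most $2n+k-4$ that do \emph{not} factor through the rank-one matrix --- exactly the ingredient you mention only as an optional supplement (``if needed'') and never develop --- and this is where $k\ge 2$ genuinely enters (it is what makes $2n+k-4\ge 2n-2$, so the claimed bound does not contradict the known lower bound attained by sets such as the one above). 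As written, your argument establishes only the Pappacena specialization $\ell(\mathcal{S})\le 2n+k-2$.
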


The Paz's Conjecture under the assumption that the generating system $\mathcal{S}\subset\mathrm{M}_n\left(\mathbb{F}\right)$ contains a nonderogatory matrix is true.

\begin{theo}[\cite{r2}, Theorem 2.4]
Let $\mathbb{F}$ be an arbitrary field, $n\in\mathbb{N}$. If a generating system $\mathcal{S}\subset\mathrm{M}_n\left(\mathbb{F}\right)$ contains a nonderogatory matrix, then $\ell\left(\mathcal{S}\right)\le 2n-2$.
\end{theo}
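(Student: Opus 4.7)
The plan exploits the fact that a nonderogatory $A \in \mathcal{S}$ has centralizer $C(A) = \mathbb{F}[A]$, which is $n$-dimensional with basis $\{I, A, A^2, \ldots, A^{n-1}\}$, and admits a cyclic vector $v$. Using the invariance of length under invertible linear transformations and constant shifts (the two propositions reproduced from \cite{r6}), I would first normalize $A$ to its rational canonical (companion) form, so that the basis $\{v, Av, A^{2}v, \ldots, A^{n-1}v\}$ makes $A$ act by the standard companion shift; this puts the analysis in an explicit coordinate system without changing the quantity $\ell(\mathcal{S})$.

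The structural backbone is a bimodule analysis over $\mathbb{F}[A]$. For any sequence $B_{i_1}, \ldots, B_{i_m} \in \mathcal{S} \setminus \{A\}$, the iterated $\mathbb{F}[A]$-bimodule product
\[
\mathbb{F}[A] \cdot B_{i_1} \cdot \mathbb{F}[A] \cdot B_{i_2} \cdots \mathbb{F}[A] \cdot B_{i_m} \cdot \mathbb{F}[A]
\]
is spanned by words $A^{j_0} B_{i_1} A^{j_1} B_{i_2} \cdots B_{i_m} A^{j_m}$ with $0 \le j_k \le n-1$, of length at most $m + (m+1)(n-1)$. Since $\mathcal{S}$ generates $\mathrm{M}_n(\mathbb{F})$, the union of all such bimodule products spans the full matrix algebra. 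I would then invoke the length-stabilization principle: if $L_k(\mathcal{S}) = L_{k+1}(\mathcal{S})$ for some $k$, the chain terminates and $L_k(\mathcal{S}) = \mathrm{M}_n(\mathbb{F})$. Thus it suffices to show that no new dimension appears in passing from $L_{2n-2}(\mathcal{S})$ to $L_{2n-1}(\mathcal{S})$. Up to Cayley--Hamilton, the only length-$(2n-1)$ words to worry about are the extremal ones $A^{n-1} B_i A^{n-1}$, and I would express each such word modulo $L_{2n-2}(\mathcal{S})$ by combining the Cayley--Hamilton identity $A^n = \sum_{l<n} c_l A^l$ with commutator manipulations on $[A, B_i]$ that shift excess $A$-powers across $B_i$. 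A constant shift $A \mapsto A + \alpha I$ handles the degenerate case where a required leading coefficient of the minimal polynomial vanishes.

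The main obstacle is precisely this last saving of one unit of length: the bimodule decomposition cleanly delivers $2n - 1$, but getting down to $2n - 2$ requires a subtle argument for each word $A^{n-1} B_i A^{n-1}$, whose expression as a combination of strictly shorter words is not automatic from Cayley--Hamilton alone. A secondary difficulty is that a single bimodule $\mathbb{F}[A] \cdot B_i \cdot \mathbb{F}[A]$ is usually a proper subspace of $\mathrm{M}_n(\mathbb{F})$, so one genuinely has to incorporate iterated bimodule products across several generators of $\mathcal{S} \setminus \{A\}$ without inflating the length budget; the cyclic-vector model realizing $\mathbb{F}^n$ as $\mathbb{F}[x]/m_A(x)$ with $B_i$ acting as an $\mathbb{F}[x]$-linear endomorphism is a convenient setting for this bookkeeping. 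All required ingredients---rational canonical form, Cayley--Hamilton, constant shifts, and length stabilization---are valid over an arbitrary field, matching the statement's generality.
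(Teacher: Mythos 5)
This statement is quoted from \cite{r2} as a known tool; the present paper does not prove it, so your proposal can only be judged on its own merits, and as written it has a genuine gap at exactly the step you yourself flag, together with two further problems in the surrounding reductions. First, the claim that ``up to Cayley--Hamilton, the only length-$(2n-1)$ words to worry about are $A^{n-1}B_iA^{n-1}$'' is false: a word such as $A^{n-1}B_{i_1}A^{a}B_{i_2}A^{b}$ with $a+b=n-3$ has length $2n-1$, every $A$-exponent is at most $n-1$, and Cayley--Hamilton gives no reduction; in general any length-$(2n-1)$ word containing two or more letters from $\mathcal{S}\setminus\{A\}$ escapes your normal form. Second, even for the extremal word you give no mechanism: writing $A^{n-1}BA^{n-1}=A^{n-2}BA^{n}+A^{n-2}[A,B]A^{n-1}$ reduces the first summand via $A^{n}=\sum_{l<n}c_{l}A^{l}$, but the second summand still has length budget $(n-2)+2+(n-1)=2n-1$ because $[A,B]$ is only known to lie in $L_{2}(\mathcal{S})$; commutation does not lower total length, so nothing is gained. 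Third, your own spanning argument does not even establish the weaker bound $2n-1$: since a single bimodule $\mathbb{F}[A]B_i\mathbb{F}[A]$ is generally a proper subspace of $\mathrm{M}_n(\mathbb{F})$, you must use iterated products with $m\ge 2$ factors from $\mathcal{S}\setminus\{A\}$, and these are only spanned by words of length up to $m+(m+1)(n-1)$, which is already $3n-1$ at $m=2$. Consequently the stabilization principle cannot be invoked at level $2n-2$; whether $L_{2n-2}(\mathcal{S})=L_{2n-1}(\mathcal{S})$ is precisely what remains unproved.

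The published proof in \cite{r2} is structured quite differently: instead of rewriting maximal-length words, it exploits the cyclic vector of the nonderogatory matrix $A$ (and of its transpose) and tracks how the subspaces generated by words of bounded length grow, forcing $L_{2n-2}(\mathcal{S})=\mathrm{M}_n(\mathbb{F})$ by a dimension-counting induction rather than by a word-reduction calculus. Your bimodule bookkeeping, as it stands, yields bounds of order $3n$ --- comparable to what the present paper obtains for its weaker hypotheses --- but not $2n-2$; to close the argument you would need to import the growth lemma from \cite{r2} or an equivalent substitute.
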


Furthermore, the bound $2n-2$ also holds in the presence of a matrix with the degree of the minimal polynomial $n-1$.

\begin{theo}[\cite{r2}, Theorem 2.5]
Let $\mathbb{F}$ be an arbitrary field, $n\in\mathbb{N}$. If a generating system $\mathcal{S}\subset\mathrm{M}_n\left(\mathbb{F}\right)$ contains a matrix with minimal polynomial of degree $n-1$, then $\ell\left(\mathcal{S}\right)\le 2n-2$.
\end{theo}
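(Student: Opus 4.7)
The strategy is to reduce the theorem to the nonderogatory case already covered by Theorem 2.4. Since $A$ has minimal polynomial $f$ of degree $n-1$, its invariant-factor decomposition consists of exactly two factors $d_1 \mid d_2 = f$ with $\deg d_1 = 1$, say $d_1 = x - \mu$ for some $\mu \in \mathbb{F}$, which must be a root of $f$. Using Proposition 3.2 (or, if $I \in \langle\mathcal{S}\rangle$, using Proposition 3.1 together with an invertible change of basis), I would shift so that $\mu = 0$; after a similarity absorbing the conjugation, I may assume
\[
A = \begin{pmatrix} 0 & 0 \\ 0 & C_f \end{pmatrix},
\]
where $C_f$ is the $(n-1)\times(n-1)$ companion matrix of $f$. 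The edge case $n = 2$, in which $A$ is a scalar and therefore redundant modulo $L_0(\mathcal{S})$, reduces to Paz's bound $\ell(\mathrm{M}_2(\mathbb{F})) \le 2$.

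The core step is to find a nonderogatory matrix in $\langle\mathcal{S}\rangle$. For $n \ge 3$, the centralizer $Z(A)$ has dimension $n+2$ (from the invariant-factor formula $\sum_{i,j}\deg d_{\min(i,j)} = 3 + (n-1)$), strictly smaller than $n^2$; hence some $B \in \mathcal{S}$ satisfies $AB \ne BA$. I would then analyze the pencil $\{A + tB : t \in \mathbb{F}\}$ and argue that for a suitable $t \in \mathbb{F}$, the combination $A + tB$ is nonderogatory. The intuition is that a nontrivial perturbation along $B$ couples the $1\times 1$ scalar block with the companion block in $A$, fusing the two invariant factors $d_1, d_2$ into a single polynomial of degree $n$. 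Over infinite fields this follows from a standard Zariski-genericity argument, since the nonderogatory locus in $\mathrm{M}_n(\mathbb{F})$ is Zariski-open and, by the noncommutativity of $A$ and $B$ combined with the structural rigidity of $A$, the pencil is not contained in its complement. Over arbitrary fields one may need to replace the single-parameter pencil by a multi-parameter combination $A + \sum_j \alpha_j B_j$ drawn from several noncommuting generators.

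Once a nonderogatory matrix $A' \in \langle \mathcal{S} \rangle$ is produced, Proposition 3.1, applied with the invertible change-of-basis matrix that replaces $A$ by $A'$ and fixes the remaining generators, yields a new generating system $\mathcal{S}'$ with $\ell(\mathcal{S}') = \ell(\mathcal{S})$ and containing the nonderogatory matrix $A'$. Applying Theorem 2.4 to $\mathcal{S}'$ immediately gives $\ell(\mathcal{S}) = \ell(\mathcal{S}') \le 2n - 2$, completing the proof.

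The main obstacle is the existence claim in the second step over an arbitrary field. Over algebraically closed or infinite fields the genericity argument is routine, but over small finite fields all values of $t \in \mathbb{F}$ in a single-parameter pencil could, in principle, land in the derogatory locus; ruling this out requires a careful combinatorial or multilinear construction exploiting both the explicit block structure of $A$ and the precise generation hypothesis on $\mathcal{S}$. A secondary technicality is handling the condition $I \in \langle\mathcal{S}\rangle$ when invoking Proposition 3.2 to shift $A$; in that case the shift can be absorbed into Proposition 3.1 using $I$ as an auxiliary generator.
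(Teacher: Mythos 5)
First, a point of context: this statement is imported from \cite{r2} (Theorem 2.5 there) and is not proved in the present paper, so there is no in-paper proof to compare against; I can only judge your proposal on its own terms and against the cited source, which gives the degree-$(n-1)$ case its own separate argument rather than deducing it from the nonderogatory case by a change of generators.

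The core step of your proposal --- producing a nonderogatory matrix in $\langle\mathcal{S}\rangle$ and then invoking Theorem 2.4 via Proposition 3.1 --- is a genuine gap, and the heuristic you offer for it is false. Take $n=3$, $A=\mathrm{diag}(0,0,1)$, whose minimal polynomial $x(x-1)$ has degree $n-1=2$, and $B=E_{13}$. Then $AB=0\neq E_{13}=BA$, so $B$ does not commute with $A$, yet
\[
(A+tB)^2 \;=\; A^2+tAE_{13}+tE_{13}A+t^2E_{13}^2 \;=\; A+tE_{13} \;=\; A+tB
\]
for every $t\in\mathbb{F}$: the entire pencil consists of idempotent, hence derogatory, matrices. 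So ``noncommutativity of $B$ with $A$ plus the structural rigidity of $A$'' does not force the pencil to meet the nonderogatory locus, and the perturbation need not fuse the two invariant factors. Any correct version of the argument would have to use the full hypothesis that $\mathcal{S}$ generates $\mathrm{M}_n(\mathbb{F})$, and you supply no mechanism for doing so: over infinite fields the Zariski-genericity step is precisely the unproved assertion that $\langle\mathcal{S}\rangle+\mathbb{F}I$ is not contained in the derogatory locus, and over finite fields you concede the argument is missing entirely. Note also that if this reduction were routine, Theorem 2.5 would be an immediate corollary of Theorem 2.4 and Proposition 3.1, whereas the source treats it as a separate theorem requiring its own proof. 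The surrounding scaffolding (the invariant-factor decomposition $d_1\mid d_2$ with $\deg d_1=1$, the use of Propositions 3.1 and 3.2, the $n=2$ edge case) is fine, but it does not touch the missing step.
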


In particular, the following theorem can be used to find length constraints on the generating set $\mathcal{S}$ when the degree of the minimal polynomial exceeds half of the order of the matrix algebra, and when the conditions of certain Jordan blocks are satisfied.

\begin{theo}[\cite{r2}, Theorem 3.1]
Let $\mathbb{F}$ be an algebraically closed field, $n\in \mathbb{N}, k\in\mathbb{Z}^+$, and $2k<n$. Suppose a generating set $\mathcal{S}$ for the matrix algebra $\mathrm{M}_n\left(\mathbb{F}\right)$ contains a matrix $A$ with the degree of minimal polynomial equal to $n-k$. Furthermore, for any eigenvalue $\lambda$ of $A$ the Jordan matrix $A_{\lambda}$ corresponding to $\lambda$ in the Jordan normal form of $A$ satisfies the same relation $2k_{\lambda}<n_{\lambda}$ where $n_{\lambda}$ is the size of $A_{\lambda}$ and $n_{\lambda}-k_{\lambda}$ is the degree of minimal polynomial of $A_{\lambda}$. Then $\ell\left(\mathcal{S}\right)\le 2n-2+k$.
\end{theo}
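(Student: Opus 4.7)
The plan is to leverage the ``almost non-derogatory'' structure forced by the hypothesis, adapting the spanning argument that proves the non-derogatory bound $2n-2$ and paying an extra additive cost of $k$ for the derogatory defect. Since conjugation preserves the length of a generating system, I would first conjugate $A$ into its Jordan canonical form $A = \bigoplus_{\lambda} A_{\lambda}$, with $A_{\lambda}$ acting on the generalized eigenspace $V_{\lambda}$ of dimension $n_{\lambda}$ and minimal polynomial degree $m_{\lambda} := n_{\lambda} - k_{\lambda}$. The hypothesis $2 k_{\lambda} < n_{\lambda}$ gives $m_{\lambda} > n_{\lambda}/2$, so within each $V_{\lambda}$ the largest Jordan block is unique and of size $m_{\lambda}$, while the remaining blocks span a complement of dimension $k_{\lambda}$ strictly smaller than the cyclic part. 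This is the geometric fact driving the whole argument.

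Next, applying the Chinese Remainder Theorem to the minimal polynomial $\prod_{\lambda}(x-\lambda)^{m_{\lambda}}$ of total degree $n-k$, the spectral projections $P_{\lambda}$ onto the $V_{\lambda}$ are polynomials in $A$ of degree at most $n-k-1$ and hence lie in $L_{n-k-1}(\mathcal{S})$. The unique-largest-block property then implies that each $E_{\lambda} := (A - \lambda I)^{m_{\lambda}-1} P_{\lambda}$ is a rank-one matrix, supported on the top vector of the largest block of $V_{\lambda}$, still lying in $L_{n-k-1}(\mathcal{S})$. Invoking the rank-one corollary recalled just before the theorem would only yield $\ell(\mathcal{S}) \le 2n + (n-k-1) - 4 = 3n - k - 5$, which is weaker than $2n-2+k$ whenever $2k < n - 3$, so a sharper spanning argument is needed.

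To reach the sharper bound I would instead mimic directly the non-derogatory proof. Fix a second generator $B \in \mathcal{S}$ and consider the family of products $(A - \lambda I)^{a} P_{\lambda} \cdot B \cdot (A - \mu I)^{b} P_{\mu}$ for eigenvalues $\lambda, \mu$ and exponents $0 \le a \le m_{\lambda} - 1$, $0 \le b \le m_{\mu} - 1$. On each ``block'' $V_{\lambda} \to V_{\mu}$ these should play the role of the rank-one words $E_{ii} B E_{jj}$ in the non-derogatory case and span the cyclic-to-cyclic part, while the $k_{\lambda}$-dimensional non-cyclic tail inside each $V_{\lambda}$ is filled in by paying at most one extra multiplication by $A$ per missing dimension; summing over $\lambda$ contributes the additional $\sum_{\lambda} k_{\lambda} = k$ to the depth.

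The main obstacle, as I see it, is this last step: simultaneously showing that the constructed family spans $\mathrm{M}_n(\mathbb{F})$ and that every member is realized within depth $2n - 2 + k$. This requires a careful Jordan-basis computation tracking how the non-cyclic tail inside each $V_{\lambda}$ interacts both with the cyclic part and with $B$; the condition $2 k_{\lambda} < n_{\lambda}$ is precisely what keeps this tail small enough to contribute only an additive $k$ to the length rather than a multiplicative blow-up. Once the span is verified, collecting the lengths of the spanning words gives the target bound $2n - 2 + k$.
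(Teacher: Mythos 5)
Note first that the paper you are working from does not prove this statement at all: it is quoted as Theorem~3.1 of \cite{r2} and used only as background, so the comparison has to be made against the argument in \cite{r2} itself, which establishes the bound by extending the inductive, dimension-growth machinery of its Section~2 (the nonderogatory case) rather than by exhibiting an explicit spanning family of words.

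Your structural observations are correct and match the starting point of that proof: $2k_\lambda<n_\lambda$ forces a unique largest Jordan block of size $m_\lambda=n_\lambda-k_\lambda$ in each generalized eigenspace, the spectral projections $P_\lambda$ are polynomials in $A$ of degree at most $n-k-1$, and the resulting rank-one matrix in $L_{n-k-1}(\mathcal{S})$ only yields $3n-k-5$ via the rank-one corollary, which you rightly note is weaker than $2n-2+k$ once $n>2k+3$. The gap is in the proposed repair, and it is not merely an unfinished computation: it is a step that cannot work as stated. Every word of the form $(A-\lambda I)^{a}P_\lambda\, B\,(A-\mu I)^{b}P_\mu$, for your fixed second generator $B$, lies in the subspace $\mathbb{F}[A]\,B\,\mathbb{F}[A]$, and ``paying one extra multiplication by $A$ per missing dimension'' keeps you inside it, since left or right multiplication by $A$ preserves $\mathbb{F}[A]\,B\,\mathbb{F}[A]$. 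Together with $\mathbb{F}[A]$ this space has dimension at most $(n-k)^{2}+(n-k)=(n-k)(n-k+1)\le (n-1)n=n^{2}-n<n^{2}$ for every $k\ge 1$, so no family of single-$B$ sandwich words, however cleverly chosen, can span $\mathrm{M}_n(\mathbb{F})$. Reaching the remaining dimensions requires words containing at least two letters outside $\mathbb{F}[A]$ (or several distinct generators), and controlling the lengths of such words is precisely the content of the inductive argument in \cite{r2} that your sketch replaces with the unproven clause ``once the span is verified.'' As written, that span is provably proper, so the decisive step of the proposal fails.
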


In Theorem \ref{T10} and Theorem \ref{T11}, we address the problem of reducing constraints imposed on the Jordan canonical form and establish a linear upper bound for the length of generating systems. This result holds under the general condition that $m(\mathcal{S})>\frac{n}{2}$.

\begin{theo}[\cite{r2}, Lemma 3.2]
Let $\mathbb{F}$ be an arbitrary field and $k, n\in\mathbb{N}, n=2k$. Let $\mathcal{S}\subset\mathrm{M}_n\left(\mathbb{F}\right)$ be a generating system. If $\mathcal{S}$ contains the matrix $A=J_k\bigoplus J_k$, then $\ell\left(\mathcal{S}\right)\le \frac{5n}{2}-2$.
\end{theo}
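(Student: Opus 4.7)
The key observation is that $A = J_k \oplus J_k$ induces a tensor decomposition $\mathrm{M}_n(\mathbb{F}) \cong \mathrm{M}_2(\mathbb{F}) \otimes \mathrm{M}_k(\mathbb{F})$: after regrouping the basis as $\{e_i, e_{i+k}\}$ for $i = 1, \dots, k$, the matrix $A$ corresponds to $I_2 \otimes J_k$. Every $B \in \mathcal{S}$ then writes uniquely as $B = \sum_{\alpha,\beta \in \{1,2\}} E_{\alpha\beta} \otimes B_{\alpha\beta}$ with $B_{\alpha\beta} \in \mathrm{M}_k(\mathbb{F})$, and an $A$-sandwich acts block-by-block:
\[
A^i B A^j \;=\; \sum_{\alpha,\beta} E_{\alpha\beta} \otimes (J_k^i B_{\alpha\beta} J_k^j).
\]
Propositions 3.1 and 3.2 allow further normalization of $\mathcal{S}$ (e.g., subtracting scalar matrices from its generators) without altering $\ell(\mathcal{S})$, so we may freely adjust the other generators.

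I would first use all words of the form $A^i B A^j$ with $B \in \mathcal{S}$ and $0 \le i, j \le k-1$; each such word has length at most $2k-1$, and collectively they realize, for each block position $(\alpha,\beta)$, the $\mathbb{F}[J_k]$-sub-bimodule $V_{\alpha\beta} \subseteq \mathrm{M}_k(\mathbb{F})$ generated by $\{B_{\alpha\beta} : B \in \mathcal{S}\}$. The remainder of the length budget — roughly $5k - 2 - (2k - 1) = 3k - 1$ additional steps — would then be spent on longer alternating words of the form $A^{i_0} B_{j_1} A^{i_1} B_{j_2} A^{i_2} B_{j_3} A^{i_3} B_{j_4} A^{i_4}$ with suitably bounded exponents, designed to mix the block positions (via products in the $\mathrm{M}_2$-factor) and to multiply together pieces of different $V_{\alpha\beta}$'s (via products in the $\mathrm{M}_k$-factor). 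Because $\mathcal{S}$ generates all of $\mathrm{M}_n(\mathbb{F})$, these combinations must eventually exhaust $\mathrm{M}_2(\mathbb{F}) \otimes \mathrm{M}_k(\mathbb{F})$, and the expected stabilization $L_{5k-2}(\mathcal{S}) = L_{5k-1}(\mathcal{S})$ forces $L_{5k-2}(\mathcal{S}) = \mathrm{M}_n(\mathbb{F})$.

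The main obstacle will be verifying that the total length needed is indeed no more than $5k - 2$: one must show that for each block position $(\alpha,\beta)$ and each $X \in \mathrm{M}_k(\mathbb{F})$, the simple tensor $E_{\alpha\beta} \otimes X$ lies in $L_{5k-2}(\mathcal{S})$. The underlying difficulty is that $A$ is derogatory — its polynomial algebra is only $k$-dimensional and treats both summands identically — so the powers of $A$ alone cannot distinguish or swap the two blocks, and one must rely on the $\mathrm{M}_2$-parts of the other generators to provide the swap structure while keeping tight control of the accompanying $\mathrm{M}_k$-factors. This is precisely where the extra $k$-sized slack in $5k-2$ (compared to the Paz bound $2n-2 = 4k-2$ that applies when a nonderogatory element is present) must be accounted for: roughly, one pays an extra $k$ in length to produce sufficiently flexible block-mixing words from $A$ and $\mathcal{S}$ before reaping the benefit of the existing linear bounds on the $\mathrm{M}_k$-factor.
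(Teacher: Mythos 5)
This statement is imported by the paper from \cite{r2} (Lemma 3.2) and is stated here without proof, so there is no internal argument to compare against; judged on its own terms, your proposal has a genuine gap. The setup is fine: $J_k\oplus J_k=I_2\otimes J_k$ under $\mathrm{M}_{2k}(\mathbb{F})\cong\mathrm{M}_2(\mathbb{F})\otimes\mathrm{M}_k(\mathbb{F})$, and the identity $A^iBA^j=\sum_{\alpha,\beta}E_{\alpha\beta}\otimes J_k^iB_{\alpha\beta}J_k^j$ is correct. But everything after that describes what a proof would have to accomplish rather than proving it. Two concrete problems. First, the length-$(2k-1)$ words $A^iBA^j$ do \emph{not} ``realize, for each block position $(\alpha,\beta)$, the bimodule $V_{\alpha\beta}$'': a single such word carries all four components $J_k^iB_{\alpha\beta}J_k^j$ simultaneously, with the \emph{same} exponents $(i,j)$ in every block, so their linear span is in general a proper ``diagonally coupled'' subspace of $\bigoplus_{\alpha,\beta}E_{\alpha\beta}\otimes V_{\alpha\beta}$; you cannot isolate one block position at this stage, and this coupling is precisely the difficulty caused by $A$ being derogatory. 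Second, and fatally, the concluding step is circular: ``these combinations must eventually exhaust $\mathrm{M}_2\otimes\mathrm{M}_k$'' only restates that $\ell(\mathcal{S})<\infty$, and ``the stabilization $L_{5k-2}(\mathcal{S})=L_{5k-1}(\mathcal{S})$ forces $L_{5k-2}(\mathcal{S})=\mathrm{M}_n(\mathbb{F})$'' presupposes that the chain has already stabilized by step $5k-2$, which is exactly the assertion to be proved. No mechanism is supplied that converts the remaining budget of $3k-1$ letters into generation of the full algebra.

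For calibration, note that the two black-box tools quoted in this paper do not yield the stated bound either, so the missing step cannot be outsourced to them: applying Theorem 3.4 to the rank-two word $A^{k-1}=E_{1,k}+E_{k+1,2k}\in L_{k-1}(\mathcal{S})$ gives only $\ell(\mathcal{S})\le 2n+n-2+(k-1)-1=\tfrac{7n}{2}-4$, while Corollary 3.5 would require producing a rank-one matrix among words of length at most $\tfrac{n}{2}+2$, which the presence of two Jordan blocks of equal maximal size obstructs. Any correct proof therefore has to carry out a bespoke quantitative analysis of how the $2\times 2$ block parts of the other generators break the symmetry between the two copies of $J_k$ --- that is, an explicit bookkeeping of which matrices $E_{\alpha\beta}\otimes X$ are reached at which length. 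Your plan points in a reasonable direction and correctly locates the difficulty, but the quantitative core, which is the entire content of the lemma, is not carried out.
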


Nevertheless, the current theorem fails to address scenarios where the degrees of minimal polynomials within the generating set do not exceed half the order of the matrix algebra. This specific case will be systematically examined in the subsequent Theorem \ref{T12}.

\subsection{Main results}
\begin{theo}
\label{T10}
Let $\mathbb{F}$ be an algebraically closed field, $n,t\in\mathbb{Z}^+$ with $n=2t$, and let $\mathcal{S}$ be a generating system of $\mathrm{M}_n\left(\mathbb{F}\right)$. If $m(\mathcal{S})=t+k$ for $k\in \{1,2,\cdots,t\}$, then $l\left(\mathcal{S}\right)\le 3n-5$.
\end{theo}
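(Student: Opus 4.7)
The plan is to exhibit a rank-one element in the low-level span $L_{t+k-1}(\mathcal{S})$ as a polynomial in $A$, then invoke \cite[Corollary 7]{r3}. Since conjugation by an invertible matrix is an algebra automorphism and hence preserves lengths, I would first place $A$ in Jordan canonical form. Let $\lambda_1,\dots,\lambda_s$ be its distinct eigenvalues, let $d_i$ denote the size of the largest Jordan block at $\lambda_i$, and let $r_i$ count the number of blocks of size $d_i$ at $\lambda_i$. Then $\sum_i d_i = t+k$ (the degree of the minimal polynomial of $A$) and $\sum_i r_i d_i \le n = 2t$. A pigeonhole observation is the key: if every $r_i \ge 2$, summing forces
\[
2(t+k) \;=\; 2\sum_i d_i \;\le\; 2t,
\]
contradicting $k\ge 1$. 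Therefore some eigenvalue $\lambda^*$ admits a strictly unique largest Jordan block, of size $d^*$.

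I would then consider
\[
p(x) \;=\; (x-\lambda^*)^{d^*-1}\prod_{\lambda\ne\lambda^*}(x-\lambda)^{d_\lambda},
\]
a polynomial of degree exactly $t+k-1$. On any Jordan block attached to $\lambda\ne\lambda^*$, the factor $(x-\lambda)^{d_\lambda}$ annihilates; on Jordan blocks at $\lambda^*$ of size strictly smaller than $d^*$, the factor $(x-\lambda^*)^{d^*-1}$ annihilates. On the unique block of size $d^*$, writing $p(x) = (x-\lambda^*)^{d^*-1} q(x)$ one has $q(\lambda^*) = \prod_{\lambda\ne\lambda^*}(\lambda^*-\lambda)^{d_\lambda}\ne 0$, so $q\bigl(J_{d^*}(\lambda^*)\bigr)$ is invertible and $p\bigl(J_{d^*}(\lambda^*)\bigr) = q(\lambda^*)\bigl(J_{d^*}(\lambda^*)-\lambda^* E_{d^*}\bigr)^{d^*-1} + \cdots$ has rank one. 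Hence $p(A)$ has rank exactly one, and since $\deg p = t+k-1$ it belongs to $L_{t+k-1}(\mathcal{S})$.

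For $n\ge 3$ one has $t+k-1\ge 2$, so \cite[Corollary 7]{r3} yields
\[
\ell(\mathcal{S}) \;\le\; 2n + (t+k-1) - 4 \;=\; 2n + t + k - 5 \;\le\; 3n-5,
\]
using $k\le t$ in the final inequality. The boundary $n=2$ forces $A$ nonderogatory and is immediately covered by Theorem 2.4 of \cite{r2}, giving $\ell(\mathcal{S})\le 2n-2$. I expect the only genuine obstacle to be the pigeonhole step producing a unique largest Jordan block; once that is secured, both the rank computation and the appeal to the rank-one corollary are routine Jordan block bookkeeping, and the hypothesis $m(\mathcal{S}) = t+k > n/2$ enters precisely to make the pigeonhole go through.
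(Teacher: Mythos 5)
Your proof is correct, and while it rests on the same two pillars as the paper's argument --- producing a rank-one matrix as a polynomial of degree $t+k-1$ in $A$, then invoking Shitov's rank-one corollary to get $\ell(\mathcal{S})\le 2n+(t+k-1)-4\le 3n-5$ --- your execution is genuinely different and considerably more economical. The paper proves the theorem by an extended case analysis on the number of distinct eigenvalues of $A$ (one, two, three, then $s$ in general) and, within each case, on the exponent pattern of the minimal polynomial and on how many copies of the relevant maximal Jordan block can fit into the complementary block $N$; in each subcase it exhibits an explicit product of factors $(A-\lambda_i E)^{e_i}$ equal to a scalar multiple of a matrix unit. Your single pigeonhole lemma --- if every eigenvalue had at least two Jordan blocks of maximal size then $n\ge\sum_i r_i d_i\ge 2\sum_i d_i=2(t+k)>2t=n$, so some $\lambda^*$ has a strictly unique largest block --- isolates the one fact that all of those subcases are secretly verifying, and your polynomial $p(x)=(x-\lambda^*)^{d^*-1}\prod_{\lambda\ne\lambda^*}(x-\lambda)^{d_\lambda}$ is the uniform template of which each of the paper's displayed identities is a special instance. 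The rank computation is right: $p$ kills every block except the unique maximal one at $\lambda^*$, where it equals $q(\lambda^*)\bigl(J_{d^*}\bigr)^{d^*-1}$, a rank-one matrix unit. What your approach buys is a proof that is a few lines instead of many pages and that transparently extends to the odd case $n=2t+1$ (the paper's Theorem 4.2) with no change other than the arithmetic in the pigeonhole step. One caveat you share with the paper: for $n=2$ (so $t=k=1$) the degree $t+k-1=1$ falls below the hypothesis $k\ge 2$ of the rank-one corollary, and your fallback via the nonderogatory theorem yields only $2n-2=2$, not $3n-5=1$; since $\ell(\mathrm{M}_2(\mathbb{F}))=2$, the stated bound is in fact unattainable at $n=2$, an edge case the paper's own proof silently glosses over as well. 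For $n\ge 4$ your argument is complete.
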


\begin{proof}
Since $m(\mathcal{S})=t+k$, the generating system $\mathcal{S}$ contains a matrix $A$ with the minimal polynomial of degree $t+k$.
\item [(1)]If $A$ has a single eigenvalue $\lambda_1$, then the minimal polynomial of $A$ is $m(\lambda)=(\lambda-\lambda_1)^{t+k}$. By the Jordan Decomposition Theorem, there exists an invertible matrix $P$ such that
\begin{equation*}
\textbf{$P^{-1}AP$}=    
\begin{pmatrix}     
J_{t+k}(\lambda_1) &     \\       
& N_{(t-k)\times (t-k)}     
\end{pmatrix},\
\end{equation*}
where $N-\lambda_1E$ is a nilpotent Jordan matrix of size $t-k$.

The structure of $N$ is 
\begin{equation*}
N=J_{k_1}(\lambda_1)\oplus J_{k_2}(\lambda_1)\oplus \cdots \oplus J_{k_s}(\lambda_1)\oplus \lambda_1E_r,
\end{equation*}
where $t-k=k_1+k_2+\cdots +k_s+r$ and $t-k \ge k_1 \ge k_2 \ge \cdots \ge k_s\ge 2$.

It is evident that there is at most one diagonal block $J_{t+k}(\lambda_1)$ in $P^{-1}AP$, since $t+k> t-k$ for all $k$. Consequently, 
\begin{equation*}
(P^{-1}AP-\lambda_1\mathnormal{E})^{t+k-1}=\mathnormal{E}_{1,t+k},
\end{equation*}
which implies
\begin{equation*}
(A-\lambda_1\mathnormal{E})^{t+k-1}=P\mathnormal{E}_{1,t+k}P^{-1}. 
\end{equation*}
Since the linear span of $\mathnormal{E}, A, \cdots, A^{t+k-1}$ includes the rank-one matrix $P\mathnormal{E}_{1,t+k}P^{-1}$, it follow that 
\begin{equation*}
\ell\left(\mathcal{S}\right)\le 2n+t+k-1-4 \le 2n+2t-5=3n-5,
\end{equation*}
where the first inequality holds by Corollary 3.5.
\item [(2)]If $A$ has two distinct eigenvalues $\lambda_1, \lambda_2$, then the minimal polynomial of $A$ must then be of the form $m(\lambda)=(\lambda-\lambda_1)^{t+k-s}(\lambda-\lambda_2)^s$($\lambda_1$ and $\lambda_2$ appear symmetrically in this situation), where $s\in\mathbb{Z}^+$ and $t+k-s\ge s$.
\item [(\rmnum{1})]If the minimal polynomial of $A$ is $m(\lambda)=(\lambda-\lambda_1)^{t+k-1}(\lambda-\lambda_2)$, by the Jordan Decomposition Theorem, then there exists an invertible matrix $P$ such that 
\begin{equation*}
\textbf{$P^{-1}AP$}=    
\begin{pmatrix}     
J_{t+k-1}(\lambda_1) &     \\         
&\lambda_2\\     
&  & N_{(t-k)\times (t-k)}     
\end{pmatrix}.\
\end{equation*}
There is at most one diagonal block $J_{t+k-1}(\lambda_1)$ in $P^{-1}AP$, since $t+k-1> t-k$ for any $k$. Thus, we have 
\begin{equation*}
(P^{-1}AP-\lambda_1\mathnormal{E})^{t+k-2}(P^{-1}AP-\lambda_2\mathnormal{E})=(\lambda_1-\lambda_2)\mathnormal{E}_{1,t+k-1},
\end{equation*}
which implies
\begin{equation*}
(A-\lambda_1\mathnormal{E})^{t+k-2}(A-\lambda_2\mathnormal{E})=(\lambda_1-\lambda_2)P\mathnormal{E}_{1,t+k-1}P^{-1}. 
\end{equation*}
The linear span of $\mathnormal{E}, A, \cdots, A^{t+k-1}$ contains the rank-one matrix $P\mathnormal{E}_{1,t+k-1}P^{-1}$, so 
\begin{equation*}
\ell\left(\mathcal{S}\right)\le 2n+t+k-1-4 \le 2n+2t-5=3n-5
\end{equation*}
by Corollary 3.5.
\item [(\rmnum{2})]If the minimal polynomial of $A$ is $m(\lambda)=(\lambda-\lambda_1)^{t+k-2}(\lambda-\lambda_2)^2$, by the Jordan Decomposition Theorem, then there exists an invertible matrix $P$ such that 
\begin{equation*}
\textbf{$P^{-1}AP$}=    
\begin{pmatrix}     
J_{t+k-2}(\lambda_1) &     \\         
&J_2(\lambda_2)\\     
&  & N_{(t-k)\times (t-k)}     
\end{pmatrix}.\
\end{equation*} 
The matrix $P^{-1}AP$ contains at most two diagonal blocks $J_{t+k-2}(\lambda_1)$, as $t+k-2\ge t-k$ holds for all $k$. 

\ding{192}If $k>1$, then $t+k-2> t-k$. This implies that $P^{-1}AP$ has only one diagonal block $J_{t+k-2}(\lambda_1)$. Consequently,
\begin{equation*}
(P^{-1}AP-\lambda_1\mathnormal{E})^{t+k-3}(P^{-1}AP-\lambda_2\mathnormal{E})^2=(\lambda_1-\lambda_2)^2\mathnormal{E}_{1,t+k-2},
\end{equation*}
which translates to
\begin{equation*}
(A-\lambda_1\mathnormal{E})^{t+k-3}(A-\lambda_2\mathnormal{E})^2=(\lambda_1-\lambda_2)^2P\mathnormal{E}_{1,t+k-2}P^{-1}. 
\end{equation*}
Since the linear span of $\mathnormal{E}, A, \cdots, A^{t+k-1}$ contains the rank-one matrix $P\mathnormal{E}_{1,t+k-2}P^{-1}$, Corollary 3.5 gives
\begin{equation*}
\ell\left(\mathcal{S}\right)\le 2n+t+k-1-4 \le 2n+2t-5=3n-5.
\end{equation*}

\ding{193}If $k=1$, then $t+k-2= t-k=t-1$. In this case, the Jordan form becomes
\begin{equation*}
\textbf{$P^{-1}AP$}=    
\begin{pmatrix}     
J_{t-1}(\lambda_1) &     \\    
&J_2(\lambda_2)\\
&  & N_{(t-1)\times (t-1)}     
\end{pmatrix}.\
\end{equation*}
Here, $P^{-1}AP$ may contain at most two diagonal blocks $J_{t-1}(\lambda_1)$. 

$\bullet$ Assume that there are two diagonal blocks $J_{t-1}(\lambda_1)$ in $P^{-1}AP$. In this case, 
\begin{equation*}
\textbf{$P^{-1}AP$}=    
\begin{pmatrix}     
J_{t-1}(\lambda_1) &     \\    
&J_2(\lambda_2)\\
&  & J_{t-1}(\lambda_1)     
\end{pmatrix}.\
\end{equation*} 
Then
\begin{equation*}
(P^{-1}AP-\lambda_1\mathnormal{E})^{t-1}(P^{-1}AP-\lambda_2\mathnormal{E})=(\lambda_2-\lambda_1)^{t-1}\mathnormal{E}_{t,t+1},
\end{equation*}
which implies
\begin{equation*}
(A-\lambda_1\mathnormal{E})^{t-1}(A-\lambda_2\mathnormal{E})=(\lambda_2-\lambda_1)^{t-1}P\mathnormal{E}_{t,t+1}P^{-1}. 
\end{equation*}

$\bullet$ Assume that there is at most one diagonal block $J_{t-1}(\lambda_1)$ in $P^{-1}AP$. In this case, $P^{-1}AP$ has an eigenvalue $\lambda_1$ with a single corresponding Jordan block of the maximal size. Consequently,
\begin{equation*}
(P^{-1}AP-\lambda_1\mathnormal{E})^{t-2}(P^{-1}AP-\lambda_2\mathnormal{E})^2=(\lambda_1-\lambda_2)^2\mathnormal{E}_{1,t-1},
\end{equation*}
which implies 
\begin{equation*}
(A-\lambda_1\mathnormal{E})^{t-2}(A-\lambda_2\mathnormal{E})^2=(\lambda_1-\lambda_2)^{2}P\mathnormal{E}_{1,t-1}P^{-1}. 
\end{equation*}

In both cases, the linear span of $\mathnormal{E}, A, \cdots, A^{t}$ contains a rank-one matrix. By Corollary 3.5, we conclude
\begin{equation*}
\ell\left(\mathcal{S}\right)\le 2n+t+k-1-4 \le 2n+2t-5=3n-5.
\end{equation*}

\item [(\rmnum{3})]If the minimal polynomial of $A$ is $m(\lambda)=(\lambda-\lambda_1)^{t+k-s}(\lambda-\lambda_2)^s$, where $3\le s\le \lfloor{\frac{t+k}{2}}\rfloor$, by the Jordan Decomposition Theorem, then there exists an invertible matrix $P$ such that 
\begin{equation*}
\textbf{$P^{-1}AP$}=    
\begin{pmatrix}     
J_{t+k-s}(\lambda_1) &     \\         
&J_s(\lambda_2)\\     
&  & N_{(t-k)\times (t-k)}     
\end{pmatrix}.\
\end{equation*} 
Let $u$ denote the number of diagonal blocks $J_{t+k-s}(\lambda_1)$ in $N$. Observe that $t-k-u(t+k-s)\ge 0$. Then
\begin{equation*}
\begin{aligned}
    t-k&\ge u(t+k-s)=\frac{u}{2}(t+k-s)+\frac{u}{2}(t+k-s)\\
    &\ge \frac{u}{2}(t+k-s)+\frac{u}{2}s=\frac{u}{2}(t+k),
\end{aligned}
\end{equation*}
since $t+k-s\ge s$. Thus,
\begin{equation*}
\begin{aligned}    
u\le \frac{2(t-k)}{t+k}=\frac{2(t+k)-4k}{t+k}=2-\frac{4k}{t+k}<2.
\end{aligned}
\end{equation*}
Therefore, $N$ contains at most one diagonal block $J_{t+k-s}(\lambda_1)$. 

$\bullet$ Assume that there is only one diagonal block $J_{t+k-s}(\lambda_1)$ in $N$. In this case, the Jordan block $J_{s}(\lambda_2)$ cannot coexist with $J_{t+k-s}(\lambda_1)$ in $N$, since $(t+k-s)+s=t+k>t-k$. Consequently,
\begin{equation*}
(P^{-1}AP-\lambda_1\mathnormal{E})^{t+k-s}(P^{-1}AP-\lambda_2\mathnormal{E})^{s-1}=(\lambda_2-\lambda_1)^{t+k-s}\mathnormal{E}_{t+k-s+1,t+k},
\end{equation*}
which translates to 
\begin{equation*}
(A-\lambda_1\mathnormal{E})^{t+k-s}(A-\lambda_2\mathnormal{E})^{s-1}=(\lambda_2-\lambda_1)^{t+k-s}P\mathnormal{E}_{t+k-s+1,t+k}P^{-1}. 
\end{equation*}

$\bullet$ Assume that there is no diagonal block $J_{t+k-s}(\lambda_1)$ in $N$. In this case, $P^{-1}AP$ has an eigenvalue $\lambda_1$ with a single corresponding Jordan block of the maximal size. Therefore, 
\begin{equation*}
(P^{-1}AP-\lambda_1\mathnormal{E})^{t+k-s-1}(P^{-1}AP-\lambda_2\mathnormal{E})^s=(\lambda_1-\lambda_2)^s\mathnormal{E}_{1,t+k-s},
\end{equation*}
implying
\begin{equation*}
(A-\lambda_1\mathnormal{E})^{t+k-s-1}(A-\lambda_2\mathnormal{E})^s=(\lambda_1-\lambda_2)^{s}P\mathnormal{E}_{1,t+k-s}P^{-1}. 
\end{equation*}

In both cases, $\langle\mathnormal{E}, A, \cdots, A^{t+k-1}\rangle$ contains a rank-one matrix. By Corollary 3.5, 
\begin{equation*}
\ell\left(\mathcal{S}\right)\le 2n+t+k-1-4 \le 2n+2t-5=3n-5.
\end{equation*}

\item [({3})]Assume that $A$ has three distinct eigenvalues $\lambda_1, \lambda_2$ and $\lambda_3$. The possible form of the minimal polynomial is $m(\lambda)=(\lambda-\lambda_1)^{i_1}(\lambda-\lambda_2)^{i_2}(\lambda-\lambda_3)^{i_3}$($\lambda_1$, $\lambda_2$ and $\lambda_3$ appear symmetrically in this situation), where $1\le i_3\le i_2 \le i_1 \le t+k-2$. 
\item [(\rmnum{1})]If the minimal polynomial of $A$ is $m(\lambda)=(\lambda-\lambda_1)^{t+k-j_1-1}(\lambda-\lambda_2)^{j_1}(\lambda-\lambda_3)$, where $t+k-j_1-1\ge j_1\ge 1$, by the Jordan Decomposition Theorem, then there exists an invertible matrix $P$ such that 
\begin{equation*}
\textbf{$P^{-1}AP$}=    
\begin{pmatrix}     
J_{t+k-j_1-1}(\lambda_1) &     \\         
&J_{j_1}(\lambda_2)\\          
&   &\lambda_3\\     
& &  & N_{(t-k)\times (t-k)}     
\end{pmatrix}.\
\end{equation*}
Let $u$ denote the number of the Jordan blocks $J_{t+k-j_1-1}(\lambda_1)$ in $N$. Note that $t-k-u(t+k-j_1-1)\ge 0$. Then
\begin{equation*}
\begin{aligned}    
t-k&\ge u(t+k-j_1-1)=\frac{u}{2}(t+k-j_1-1)+\frac{u}{2}(t+k-j_1-1)\\    
&\ge \frac{u}{2}(t+k-j_1-1)+\frac{u}{2}j_1=\frac{u}{2}(t+k-1),
\end{aligned}
\end{equation*}
since $t+k-j_1-1\ge j_1$. Thus,
\begin{equation*}
\begin{aligned}    
u\le \frac{2(t-k)}{t+k-1}=\frac{2(t+k-1)+2-4k}{t+k-1}=2+\frac{2-4k}{t+k-1}<2.
\end{aligned}
\end{equation*}
Therefore, there is at most one block $J_{t+k-j_1-1}(\lambda_1)$ in $N$. 

$\bullet$ Assume that there is only one diagonal block $J_{t+k-j_1-1}(\lambda_1)$ in $N$. In this case, the Jordan block $J_{j_1}(\lambda_2)$ does not exist in $N$, since $(t+k-j_1-1)+j_1=t+k-1>t-k$. Consequently,
\begin{equation*}
\begin{aligned}
&(P^{-1}AP-\lambda_1\mathnormal{E})^{t+k-j_1-1}(P^{-1}AP-\lambda_2\mathnormal{E})^{j_1-1}(P^{-1}AP-\lambda_3\mathnormal{E})\\
&=(\lambda_2-\lambda_1)^{t+k-j_1-1}(\lambda_2-\lambda_3)\mathnormal{E}_{t+k-j_1,t+k-1},
\end{aligned}
\end{equation*}
which implies
\begin{equation*}
\begin{aligned}
&(A-\lambda_1\mathnormal{E})^{t+k-j_1-1}(A-\lambda_2\mathnormal{E})^{j_1-1}(A-\lambda_3\mathnormal{E})\\
&=(\lambda_2-\lambda_1)^{t+k-j_1-1}(\lambda_2-\lambda_3)P\mathnormal{E}_{t+k-j_1,t+k-1}P^{-1}. 
\end{aligned}
\end{equation*}

$\bullet$ Assume that there is no Jordan block $J_{t+k-j_1-1}(\lambda_1)$ in $N$. In this case, $P^{-1}AP$ has an eigenvalue $\lambda_1$ which corresponds to a single maximal Jordan block. Therefore, 
\begin{equation*}
\begin{aligned}
&(P^{-1}AP-\lambda_1\mathnormal{E})^{t+k-j_1-2}(P^{-1}AP-\lambda_2\mathnormal{E})^{j_1}(P^{-1}AP-\lambda_3\mathnormal{E})\\
&=(\lambda_1-\lambda_2)^{j_1}(\lambda_1-\lambda_3)\mathnormal{E}_{1,t+k-j_1-1},
\end{aligned}
\end{equation*}
translating to 
\begin{equation*}
\begin{aligned}
&(A-\lambda_1\mathnormal{E})^{t+k-j_1-2}(A-\lambda_2\mathnormal{E})^{j_1}(A-\lambda_3\mathnormal{E})\\
&=(\lambda_1-\lambda_2)^{j_1}(\lambda_1-\lambda_3)P\mathnormal{E}_{1,t+k-j_1-1}P^{-1}. 
\end{aligned}
\end{equation*}

In both cases, the linear span $\langle\mathnormal{E}, A, \cdots, A^{t+k-1}\rangle$ contains a rank-one matrix. This implies that $\ell\left(\mathcal{S}\right)\le 2n+t+k-1-4 \le 2n+2t-5=3n-5$ by Corollary 3.5.

\item [(\rmnum{2})]If the minimal polynomial of $A$ is $m(\lambda)=(\lambda-\lambda_1)^{t+k-j_2-2}(\lambda-\lambda_2)^{j_2}(\lambda-\lambda_3)^2$, where $t+k-j_2-2\ge j_2\ge 2$ and $t+k\ge 6$, by the Jordan Decomposition Theorem, then there exists an invertible matrix $P$ such that 
\begin{equation*}
\textbf{$P^{-1}AP$}=    
\begin{pmatrix}     
J_{t+k-j_2-2}(\lambda_1) &     \\         
&J_{j_2}(\lambda_2)\\          
&   &J_2(\lambda_3)\\     
& &  & N_{(t-k)\times (t-k)}     
\end{pmatrix}.\
\end{equation*}
Let $u$ denote the number of the Jordan blocks $J_{t+k-j_2-2}(\lambda_1)$ in $N$. Note that 
\begin{equation*}
t-k-u(t+k-j_2-2)\ge 0. 
\end{equation*}
Then,
\begin{equation*}
\begin{aligned}    
t-k&\ge u(t+k-j_2-2)=\frac{u}{2}(t+k-j_2-2)+\frac{u}{2}(t+k-j_2-2)\\    
&\ge \frac{u}{2}(t+k-j_2-2)+\frac{u}{2}j_2=\frac{u}{2}(t+k-2),
\end{aligned}
\end{equation*}
since $t+k-j_2-2\ge j_2$. Thus,
\begin{equation*}
\begin{aligned}    
u\le \frac{2(t-k)}{t+k-2}=\frac{2(t+k-2)+4-4k}{t+k-2}=2+\frac{4-4k}{t+k-2}\le 2.
\end{aligned}
\end{equation*}
Therefore, $N$ contains at most two Jordan blocks $J_{t+k-j_2-2}(\lambda_1)$. 

\ding{192}If $k = 1$, then $u\le2$, that is, there are at most two Jordan blocks $J_{t-j_2-1}(\lambda_1)$ in $N$. Consider three cases as follows.
\item[(a)]If there are two diagonal blocks $J_{t-j_2-1}(\lambda_1)$ in $N$, then 
\begin{equation*}
    t-1-2(t-j_2-1)=2j_2+1-t\le 2j_2-4<2j_2,
\end{equation*}
since $t+k=t+1\ge 6$ implies $t\ge 5$. In this configuration, $N$ contains at most one Jordan block $J_{j_2}(\lambda_2)$. 
\item[$(a_1)$]Assume that there is only one Jordan block $J_{j_2}(\lambda_2)$ in $N$. Then $N$ has no Jordan block $J_2(\lambda_3)$, since $2(t-j_2-1)+j_2+2=(t-j_2-1)+t-j_2-1+j_2+2=(t-j_2-1)+t+1>t-1$. Consequently, 
\begin{equation*}
\begin{aligned}
&(P^{-1}AP-\lambda_1\mathnormal{E})^{t-j_2-1}(P^{-1}AP-\lambda_2\mathnormal{E})^{j_2}(P^{-1}AP-\lambda_3\mathnormal{E})\\
&=(\lambda_3-\lambda_1)^{t-j_2-1}(\lambda_3-\lambda_2)^{j_2}\mathnormal{E}_{t,t+1},
\end{aligned}
\end{equation*}
which implies
\begin{equation*}
\begin{aligned}
&(A-\lambda_1\mathnormal{E})^{t-j_2-1}(A-\lambda_2\mathnormal{E})^{j_2}(A-\lambda_3\mathnormal{E})\\
&=(\lambda_3-\lambda_1)^{t-j_2-1}(\lambda_3-\lambda_2)^{j_2}P\mathnormal{E}_{t,t+1}P^{-1}. 
\end{aligned}
\end{equation*}

\item[$(a_2)$]Assume that there is no jordan block $J_{j_2}(\lambda_2)$ in $N$. Here
\begin{equation*}
\begin{aligned}
&(P^{-1}AP-\lambda_1\mathnormal{E})^{t-j_2-1}(P^{-1}AP-\lambda_2\mathnormal{E})^{j_2-1}(P^{-1}AP-\lambda_3\mathnormal{E})^2\\
&=(\lambda_2-\lambda_1)^{t-j_2-1}(\lambda_2-\lambda_3)^{2}\mathnormal{E}_{t-j_2,t-1},
\end{aligned}
\end{equation*}
translating to
\begin{equation*}
\begin{aligned}
&(A-\lambda_1\mathnormal{E})^{t-j_2-1}(A-\lambda_2\mathnormal{E})^{j_2-1}(A-\lambda_3\mathnormal{E})^2\\
&=(\lambda_2-\lambda_1)^{t-j_2-1}(\lambda_2-\lambda_3)^2P\mathnormal{E}_{m-j_2,t-1}P^{-1}. 
\end{aligned}
\end{equation*}

In both cases, the linear span of $\mathnormal{E}, A, \cdots, A^{t}$ contains a rank-one matrix, so 
\begin{equation*}
\ell\left(\mathcal{S}\right)\le 2n+t+k-1-4 \le 2n+t-4=\frac{5n}{2}-4
\end{equation*}
by Corollary 3.5.

\item[(b)]If there is only one diagonal block $J_{t-j_2-1}(\lambda_1)$ in $N$, then $t-1-(t-j_2-1)=j_2$. In this case, the Jordan block $J_{j_2}(\lambda_2)$ has at most one in $N$. 
\item[$(b_1)$]Assume that there is only one block $J_{j_2}(\lambda_2)$ in $N$. Then $N$ has no block $J_2(\lambda_3)$, since $(t-j_2-1)+j_2+2=t+1>t-1$. Thus,
\begin{equation*}
\begin{aligned}
&(P^{-1}AP-\lambda_1\mathnormal{E})^{t-j_2-1}(P^{-1}AP-\lambda_2\mathnormal{E})^{j_2}(P^{-1}AP-\lambda_3\mathnormal{E})\\
&=(\lambda_3-\lambda_1)^{t-j_2-1}(\lambda_3-\lambda_2)^{j_2}\mathnormal{E}_{t,t+1},
\end{aligned}
\end{equation*}
which implies
\begin{equation*}
\begin{aligned}
&(A-\lambda_1\mathnormal{E})^{t-j_2-1}(A-\lambda_2\mathnormal{E})^{j_2}(A-\lambda_3\mathnormal{E})\\
&=(\lambda_3-\lambda_1)^{t-j_2-1}(\lambda_3-\lambda_2)^{j_2}P\mathnormal{E}_{t,t+1}P^{-1}. 
\end{aligned}
\end{equation*}
\item[$(b_2)$]Assume that there is no Jordan block $J_{j_2}(\lambda_2)$ in $N$. Here, we derive: 
\begin{equation*}
\begin{aligned}
&(P^{-1}AP-\lambda_1\mathnormal{E})^{t-j_2-1}(P^{-1}AP-\lambda_2\mathnormal{E})^{j_2-1}(P^{-1}AP-\lambda_3\mathnormal{E})^2\\
&=(\lambda_2-\lambda_1)^{t-j_2-1}(\lambda_2-\lambda_3)^{2}\mathnormal{E}_{t-j_2,t-1},
\end{aligned}
\end{equation*}
which implies 
\begin{equation*}
\begin{aligned}
&(A-\lambda_1\mathnormal{E})^{t-j_2-1}(A-\lambda_2\mathnormal{E})^{j_2-1}(A-\lambda_3\mathnormal{E})^2\\
&=(\lambda_2-\lambda_1)^{t-j_2-1}(\lambda_2-\lambda_3)^2P\mathnormal{E}_{t-j_2,t-1}P^{-1}. 
\end{aligned}
\end{equation*}
In both cases, $\langle\mathnormal{E}, A, \cdots, A^{t}\rangle$ contains a rank-one matrix. By Corollary 3.5,
\begin{equation*}
\ell\left(\mathcal{S}\right)\le 2n+t+k-1-4 \le 2n+t-4=\frac{5n}{2}-4.
\end{equation*}

\item[(c)]If there is no diagonal block $J_{t-j_2-1}(\lambda_1)$ in $N$, then we have 
\begin{equation*}
\begin{aligned}
&(P^{-1}AP-\lambda_1\mathnormal{E})^{t-j_2-2}(P^{-1}AP-\lambda_2\mathnormal{E})^{j_2}(P^{-1}AP-\lambda_3\mathnormal{E})^2\\
&=(\lambda_1-\lambda_2)^{j_2}(\lambda_1-\lambda_3)^2\mathnormal{E}_{1,t-j_2-1},
\end{aligned}
\end{equation*}
translating to
\begin{equation*}
\begin{aligned}
&(A-\lambda_1\mathnormal{E})^{t-j_2-2}(A-\lambda_2\mathnormal{E})^{j_2}(A-\lambda_3\mathnormal{E})^2\\
&=(\lambda_1-\lambda_2)^{j_2}(\lambda_1-\lambda_3)^{2}P\mathnormal{E}_{1,t-j_2-1}P^{-1}. 
\end{aligned}
\end{equation*}

Since the linear combinations of $\mathnormal{E}, A, \cdots, A^{t}$ include a rank-one matrix, Corollary 3.5 yields
\begin{equation*}
\ell\left(\mathcal{S}\right)\le 2n+t+k-1-4 \le 2n+t-4=\frac{5n}{2}-4.
\end{equation*}

\ding{193}If $k>1$, then $u\le1$. It means that there is at most one Jordan block $J_{t+k-j_2-2}(\lambda_1)$ in $N$. Consider two cases as follows.

$\bullet$ If there is one diagonal block $J_{t+k-j_2-2}(\lambda_1)$ in $N$, then the Jordan block $J_{j_2}(\lambda_2)$ not exists in $N$, since
\begin{equation*}    
t+k-j_2-2+j_2=t+k-2>t-k.
\end{equation*}
Consequently,
\begin{equation*}
\begin{aligned}
&(P^{-1}AP-\lambda_1\mathnormal{E})^{t+k-j_2-2}(P^{-1}AP-\lambda_2\mathnormal{E})^{j_2-1}(P^{-1}AP-\lambda_3\mathnormal{E})^2\\
&=(\lambda_2-\lambda_1)^{t+k-j_2-2}(\lambda_2-\lambda_3)^{2}\mathnormal{E}_{t+k-j_2-1,t+k-2},
\end{aligned}
\end{equation*}

$\bullet$ If there is no diagonal block $J_{t+k-j_2-2}(\lambda_1)$ in $N$, then we have 
\begin{equation*}
\begin{aligned}
&(P^{-1}AP-\lambda_1\mathnormal{E})^{t+k-j_2-3}(P^{-1}AP-\lambda_2\mathnormal{E})^{j_2}(P^{-1}AP-\lambda_3\mathnormal{E})^2\\
&=(\lambda_1-\lambda_2)^{j_2}(\lambda_1-\lambda_3)^{2}\mathnormal{E}_{1,t+k-j_2-2}.
\end{aligned}
\end{equation*}
In both cases, the linear span of $\mathnormal{E}, A, \cdots, A^{t+k-1}$ contains a rank-one matrix, so 
\begin{equation*}
l\left(\mathnormal{S}\right)\le 2n+t+k-1-4 \le 2n+2t-5=3n-5
\end{equation*}
by Corollary 3.5.

\item [(\rmnum{3})]If the minimal polynomial of $A$ is $m(\lambda)=(\lambda-\lambda_1)^{t+k-j_p-p}(\lambda-\lambda_2)^{j_p}(\lambda-\lambda_3)^p$, where $t+k-j_p-p\ge j_p\ge p$ and $t+k\ge 3p$, by the Jordan Decomposition Theorem, then there exists an invertible matrix $P$ such that 
\begin{equation*}
\textbf{$P^{-1}AP$}=    
\begin{pmatrix}     
J_{t+k-j_p-p}(\lambda_1) &     \\         
&J_{j_p}(\lambda_2)\\          
&   &J_p(\lambda_3)\\     
& &  & N_{(t-k)\times (t-k)}     
\end{pmatrix}.\
\end{equation*}
Let $u$ denote the number of the Jordan blocks $J_{t+k-j_p-p}(\lambda_1)$ in $N$. Note that 
\begin{equation*}
t-k-u(t+k-j_p-p)\ge 0.
\end{equation*}
Then
\begin{equation*}
\begin{aligned}    
t-k&\ge u(t+k-j_p-p)=\frac{u}{2}(t+k-j_p-p)+\frac{u}{2}(t+k-j_p-p)\\    
&\ge \frac{u}{2}(t+k-j_p-p)+\frac{u}{2}j_p=\frac{u}{2}(t+k-p),
\end{aligned}
\end{equation*}
since $t+k-j_p-p\ge j_p$. Thus,
\begin{equation*}
\begin{aligned}    
u&\le \frac{2(t-k)}{t+k-p}=\frac{2(t+k-p)+2p-4k}{t+k-p}=2+\frac{2p-4k}{t+k-p}\\
&\le 2+\frac{2p-4k}{2p}=3-\frac{2k}{p}<3,
\end{aligned}
\end{equation*}
since $t+k\ge 3p$. Therefore, there are at most two Jordan blocks $J_{t+k-j_p-p}(\lambda_1)$ in $N$. Consider three cases as follows.

\item[(a)]If there are two diagonal blocks $J_{t+k-j_p-p}(\lambda_1)$ in $N$, then 
\begin{equation*}
\begin{aligned}
t-k-2(t+k-j_p-p)&=-t-3k+2j_p+2p=2j_p+2p-(t+k)-2k\\
&\le 2j_p+2p-3p-2k=2j_p-p-2k<2j_p,
\end{aligned}
\end{equation*}
since $t+k\ge 3p$. Thus, the Jordan block $J_{j_p}(\lambda_2)$ has at most one in $N$. 
\item[$(a_1)$]Assume that there is only one block $J_{j_p}(\lambda_2)$ in $N$. Then $N$ has no diagonal block $J_p(\lambda_3)$, since $2(t+k-j_p-p)+j_p+p=(t+k-j_p-p)+t+k>t-k$. Therefore,
\begin{equation*}
\begin{aligned}
&(P^{-1}AP-\lambda_1\mathnormal{E})^{t+k-j_p-p}(P^{-1}AP-\lambda_2\mathnormal{E})^{j_p}(P^{-1}AP-\lambda_3\mathnormal{E})^{p-1}\\
&=(\lambda_3-\lambda_1)^{t+k-j_p-p}(\lambda_3-\lambda_2)^{j_p}\mathnormal{E}_{t+k-p+1,t+k}.
\end{aligned}
\end{equation*}

\item[$(a_2)$]Assume that there is no Jordan block $J_{j_p}(\lambda_2)$ in $N$. Here 
\begin{equation*}
\begin{aligned}
&(P^{-1}AP-\lambda_1\mathnormal{E})^{t+k-j_p-p}(P^{-1}AP-\lambda_2\mathnormal{E})^{j_p-1}(P^{-1}AP-\lambda_3\mathnormal{E})^p\\
&=(\lambda_2-\lambda_1)^{t+k-j_p-p}(\lambda_2-\lambda_3)^p\mathnormal{E}_{t+k-j_p-p+1,t+k-p}.
\end{aligned}
\end{equation*}
In both cases, the linear span $\langle\mathnormal{E}, A, \cdots, A^{t+k-1}\rangle$ contains a rank-one matrix. This implies that
\begin{equation*}
\ell\left(\mathcal{S}\right)\le 2n+t+k-1-4 \le 3n-5
\end{equation*}
by Corollary 3.5.

\item[(b)]If there is one diagonal block $J_{t+k-j_p-p}(\lambda_1)$ in $N$, then 
\begin{equation*}
t-k-(t+k-j_p-p)=j_p+p-2k\le j_p+j_p-2k=2j_p-2k<2j_p,
\end{equation*}
since $j_p\ge p$. Consequently, $N$ contains at most one Jordan block $J_{j_p}(\lambda_2)$. 
\item[$(b_1)$]Assume that there is only one block $J_{j_p}(\lambda_2)$ in $N$. The block $J_p(\lambda_3)$ cannot coexist in $N$, as $(t+k-j_p-p)+j_p+p=t+k>t-k$. Thus, we have 
\begin{equation*}
\begin{aligned}
&(P^{-1}AP-\lambda_1\mathnormal{E})^{t+k-j_p-p}(P^{-1}AP-\lambda_2\mathnormal{E})^{j_p}(P^{-1}AP-\lambda_3\mathnormal{E})^{p-1}\\
&=(\lambda_3-\lambda_1)^{t+k-j_p-p}(\lambda_3-\lambda_2)^{j_p}\mathnormal{E}_{t+k-p+1,t+k}.
\end{aligned}
\end{equation*}

\item[$(b_2)$]Assume that there is no Jordan block $J_{j_p}(\lambda_2)$ in $N$. Here 
\begin{equation*}
\begin{aligned}
&(P^{-1}AP-\lambda_1\mathnormal{E})^{t+k-j_p-p}(P^{-1}AP-\lambda_2\mathnormal{E})^{j_p-1}(P^{-1}AP-\lambda_3\mathnormal{E})^p\\
&=(\lambda_2-\lambda_1)^{t+k-j_p-p}(\lambda_2-\lambda_3)^p\mathnormal{E}_{t+k-j_p-p+1,t+k-p}.
\end{aligned}
\end{equation*}
In both cases, the linear span of $\mathnormal{E}, A, \cdots, A^{t+k-1}$ contains a rank-one matrix, 
\begin{equation*}
\ell\left(\mathcal{S}\right)\le 2n+t+k-1-4 \le 3n-5
\end{equation*}
by Corollary 3.5.

\item[(c)]If there is no diagonal block $J_{t+k-j_p-p}(\lambda_1)$ in $N$, then we have 
\begin{equation*}
\begin{aligned}
&(P^{-1}AP-\lambda_1\mathnormal{E})^{t+k-j_p-p-1}(P^{-1}AP-\lambda_2\mathnormal{E})^{j_p}(P^{-1}AP-\lambda_3\mathnormal{E})^{p}\\
&=(\lambda_1-\lambda_2)^{j_p}(\lambda_1-\lambda_3)^{p}\mathnormal{E}_{1,t+k-j_p-p}.
\end{aligned}
\end{equation*}

Since the linear span of $\mathnormal{E}, A, \cdots, A^{t+k-1}$ contains a rank-one matrix, Corollary 3.5 yields 
\begin{equation*}
\ell\left(\mathnormal{S}\right)\le 2n+t+k-1-4 \le 3n-5.
\end{equation*}

\item [({4})] Assume that $A$ has $s$ distinct eigenvalues $\lambda_1, \lambda_2, \cdots,\lambda_s$, then the minimal polynomial of $A$ is only possible $m(\lambda)=(\lambda-\lambda_1)^{i_1}(\lambda-\lambda_2)^{i_2}\cdots(\lambda-\lambda_s)^{i_s}$($\lambda_1, \lambda_2, \cdots,\lambda_s$ appear symmetrically in this situation), where $i_1,i_2,\cdots, i_s\in\mathbb{Z}^+$ and $1\le i_s\le i_{s-1}\le \cdots \le i_2\le i_1\le t+k-s+1$. By the Jordan Decomposition Theorem, then there exists a invertible matrix $P$ such that 
\begin{equation*}
\textbf{$P^{-1}AP$}=    
\begin{pmatrix}     
J_{i_1}(\lambda_1) &     \\         
&J_{i_2}(\lambda_2)\\          
&   &\ddots\\     
& &  & J_{i_s}(\lambda_s) \\      
& &  & & N_{(t-k)\times (t-k)}    
\end{pmatrix}.
\end{equation*}

Assume that there are not less than two blocks $J_{i_r}$ in $P^{-1}AP$ for every $r$, $r=1,2,\cdots, s$. Then 
\begin{equation*}   
n=2t\ge 2(i_1+i_2+\cdots +i_s)=2t+2k,
\end{equation*} 
this contradicts $2t+2k>2t$ for any $k$. Thus, there exists $t$, such that there are not more than two blocks $J_{i_p}$ in $P^{-1}AP$, where $1\le p\le s$. Therefore, the linear span of $\mathnormal{E}, A, \cdots, A^{t+k-1}$ contains a rank-one matrix. By Corollary 3.5, $\ell\left(\mathcal{S}\right)\le 2n+t+k-1-4=3n-5$.
\end{proof}

In the following section, the case of $n=2t+1$ is considered.
\begin{theo}
\label{T11}
Let $\mathbb{F}$ be an algebraically closed field and let $n,t\in\mathbb{Z}^+$, $n=2t+1$. Let $\mathcal{S}$ be a generating system of $\mathrm{M}_n\left(\mathbb{F}\right)$. If $m(\mathcal{S})=t+k$, where $k=1,2,\cdots,t+1$, then $l\left(\mathcal{S}\right)\le 3n-5$.
\end{theo}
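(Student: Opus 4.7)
The proof strategy directly parallels that of Theorem \ref{T10}. Select $A \in \mathcal{S}$ with minimal polynomial of degree $t+k$; the goal is to exhibit a rank-one matrix inside $\langle E, A, \ldots, A^{t+k-1}\rangle$, since such a matrix lies in $L_{t+k-1}(\mathcal{S})$ and Corollary 3.5 then yields
\begin{equation*}
\ell(\mathcal{S}) \le 2n + (t+k-1) - 4 \le 2n + 2t - 4 = 3n-5,
\end{equation*}
where the second inequality uses $k \le t+1$. The extremal case $k = t+1$ corresponds to $\deg A = n$ (nonderogatory $A$), for which Theorem 2.4 already delivers the stronger bound $\ell(\mathcal{S}) \le 2n-2$, so attention may be restricted to $1 \le k \le t$.

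After conjugation by an invertible matrix $P$ (which preserves rank-one matrices), one writes
\begin{equation*}
P^{-1}AP = J_{i_1}(\lambda_1) \oplus J_{i_2}(\lambda_2) \oplus \cdots \oplus J_{i_s}(\lambda_s) \oplus N,
\end{equation*}
where $\lambda_1, \ldots, \lambda_s$ are the distinct eigenvalues of $A$, $J_{i_r}(\lambda_r)$ is the largest Jordan block at $\lambda_r$ (so $\sum_r i_r = t+k$), and the remainder $N$ is a block-diagonal shifted-nilpotent matrix of size $n - (t+k) = t+1-k$. I then proceed by the same four-case split on $s \in \{1,2,3,\ge 4\}$ as in Theorem \ref{T10}. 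In every case, the decisive combinatorial step bounds the number $u$ of additional copies of $J_{i_1}(\lambda_1)$ (or of another large block associated with some fixed $\lambda_r$) embedded inside $N$: the packing constraint $u \cdot i_1 \le t+1-k$, combined with the ordering $i_1 \ge i_r$ for $r \ge 2$, produces an estimate of the form
\begin{equation*}
u \le \frac{2(t+1-k)}{t+k} = 2 - \frac{4k-2}{t+k} < 2,
\end{equation*}
forcing $u \le 1$ for every $k \ge 1$. For each resulting configuration I would write down an explicit annihilator product
\begin{equation*}
(A - \lambda_1 E)^{a_1}(A - \lambda_2 E)^{a_2}\cdots(A - \lambda_s E)^{a_s}, \qquad \sum_r a_r = t+k-1,
\end{equation*}
and verify that it equals a nonzero scalar times a matrix unit $PE_{ij}P^{-1}$, hence has rank one.

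The main obstacle is again the boundary case $k = 1$, where $N$ has size $t$ and is just large enough to accommodate a second Jordan block of maximum size; here the counting estimate above yields $u \le 1$ but the inequality is close to tight, so one must treat the subcases ``$N$ contains an extra maximal $\lambda_1$-block'' and ``$N$ contains no extra maximal $\lambda_1$-block'' separately, mirroring the $k=1$ branches in subcase $(\rmnum{2})\ding{193}$ of Theorem \ref{T10}, and check in each that the competing large blocks of other eigenvalues are forced out by the dimensional constraint $\sum_r i_r + \dim N = n$. For the final case $s \ge 4$, a pigeonhole argument closes the proof: if every eigenvalue's maximal block occurred at least twice across $P^{-1}AP$ one would obtain $n \ge 2\sum_r i_r = 2(t+k) \ge 2t+2 > n$, a contradiction, so some $\lambda_r$ has a unique maximal Jordan block and the corresponding annihilator product is automatically rank one.
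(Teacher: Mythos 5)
Your overall architecture coincides with the paper's: choose $A\in\mathcal{S}$ with $\deg A=t+k$, pass to the Jordan form, split on the number $s$ of distinct eigenvalues, produce a rank-one matrix in $\langle E,A,\dots,A^{t+k-1}\rangle$, and close with Corollary 3.5 via $2n+(t+k-1)-4\le 3n-5$. The reduction of $k=t+1$ to the nonderogatory case and the pigeonhole argument for $s\ge 4$ are both fine (your inequality $2(t+k)\ge 2t+2>n$ in fact repairs a slip in the paper's own case (4), which writes $n=2t$ there).

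The genuine gap is in what you call the decisive combinatorial step. The estimate $u\le \frac{2(t+1-k)}{t+k}<2$ is obtained from the packing constraint $u\,i_1\le t+1-k$ only if the maximal block satisfies $i_1\ge\frac{t+k}{2}$, which is guaranteed when $s=2$ (the paper uses exactly $t+k-s\ge s$ for this) but not when $s=3$. For three eigenvalues the correct bounds are of the form $u\le 2+\frac{4-4k}{t+k-1}$, $u\le 2+\frac{6-4k}{t+k-2}$, or $u\le 3+\frac{2-4k}{2p}<3$, and these permit $u=2$. Concretely, take $n=9$, $t=4$, $k=1$ and $P^{-1}AP=J_2(\lambda_1)\oplus J_2(\lambda_2)\oplus(\lambda_3)\oplus J_2(\lambda_1)\oplus J_2(\lambda_1)$, whose minimal polynomial $(\lambda-\lambda_1)^2(\lambda-\lambda_2)^2(\lambda-\lambda_3)$ has degree $5=t+k$: here $N$ contains two further copies of the maximal $\lambda_1$-block, so $u=2$ and your claim ``$u\le 1$ for every $k\ge 1$'' fails (a $k=2$ example is $n=15$ with minimal polynomial $(\lambda-\lambda_1)^3(\lambda-\lambda_2)^3(\lambda-\lambda_3)^3$). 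In such configurations no annihilator product anchored at $\lambda_1$ with exponent $i_1-1$ has rank one; one must instead spend the full power $(A-\lambda_1E)^{i_1}$, show by a separate dimension count that the maximal $\lambda_2$-block occurs at most twice, and anchor the product at $\lambda_2$ or $\lambda_3$ depending on a further subcase split --- this is precisely the content of the paper's subcases (a), $(a_1)$, $(a_2)$, (b), (c) in its three-eigenvalue analysis. You allude to ``checking that competing large blocks are forced out,'' but the inequality you actually wrote down is not the one that governs these cases, so the $s=3$, $u=2$ configurations remain unhandled in your sketch.
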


\begin{proof}
There exists a matrix $A\in \mathcal{S}$ with a minimal polynomial of degree $t+k$ because $m(\mathcal{S})=t+k$.
\item [(1)]If $A$ has only one eigenvalue $\lambda_1$, the minimal polynomial of $A$ is $m(\lambda)=(\lambda-\lambda_1)^{t+k}$. By the Jordan Decomposition Theorem, then there exists an invertible matrix $P$ such that
\begin{equation*}
\textbf{$P^{-1}AP$}=    
\begin{pmatrix}     
J_{t+k}(\lambda_1) &     \\       
& N_{(t-k+1)\times (t-k+1)}     
\end{pmatrix},\
\end{equation*}
where $N-\lambda_1E$ is a nilpotent Jordan matrix of size $t-k+1$.

The structure of $N$ is 
\begin{equation*}
N=J_{k_1}(\lambda_1)\oplus J_{k_2}(\lambda_1)\oplus \cdots \oplus J_{k_s}(\lambda_1)\oplus \lambda_1E_t,
\end{equation*}
where $t-k+1=k_1+k_2+\cdots +k_s+t$ and $t-k+1 \ge k_1 \ge k_2 \ge \cdots \ge k_s\ge 2$.

It is evident that there is at most one diagonal block $J_{t+k}(\lambda_1)$ in $P^{-1}AP$, since $t+k> t-k+1$ for all $k$. Consequently, 
\begin{equation*}
(P^{-1}AP-\lambda_1\mathnormal{E})^{t+k-1}=\mathnormal{E}_{1,t+k}.
\end{equation*}
Since the linear span of $\mathnormal{E}, A, \cdots, A^{t+k-1}$ includes the rank-one matrix $P\mathnormal{E}_{1,t+k}P^{-1}$, it follow that 
\begin{equation*}
\ell\left(\mathcal{S}\right)\le 2n+t+k-1-4 \le 2n+2t-5=3n-5,
\end{equation*}
where the first inequality holds by Corollary 3.5.

\item [(2)]If $A$ has two distinct eigenvalues $\lambda_1, \lambda_2$, then the minimal polynomial of $A$ is only possible $m(\lambda)=(\lambda-\lambda_1)^{t+k-s}(\lambda-\lambda_2)^s$($\lambda_1$ and $\lambda_2$ appear symmetrically in this situation), where $s\in\mathbb{Z}^+$ and $t+k-s\ge s$.
\item [(\rmnum{1})]If the minimal polynomial of $A$ is $m(\lambda)=(\lambda-\lambda_1)^{t+k-1}(\lambda-\lambda_2)$, by the Jordan Decomposition Theorem, then there exists an invertible matrix $P$ such that 
\begin{equation*}
\textbf{$P^{-1}AP$}=    
\begin{pmatrix}     
J_{t+k-1}(\lambda_1) &\\         
&\lambda_2\\     
&  & N_{(t-k+1)\times (t-k+1)}     
\end{pmatrix}.\
\end{equation*}
There are at most two diagonal blocks $J_{t+k-1}(\lambda_1)$ in $P^{-1}AP$, since $t+k-1\ge t-k+1$ for all $k$. 

\ding{192}If $k>1$, then $t+k-1> t-k+1$. Thus, $P^{-1}AP$ has only one diagonal block $J_{t+k-1}(\lambda_1)$. Therefore, we have 
\begin{equation*}
(P^{-1}AP-\lambda_1\mathnormal{E})^{t+k-2}(P^{-1}AP-\lambda_2\mathnormal{E})=(\lambda_1-\lambda_2)\mathnormal{E}_{1,t+k-1}.
\end{equation*}
The linear span of $\mathnormal{E}, A, \cdots, A^{t+k-1}$ contains the rank-one matrix $P\mathnormal{E}_{1,t+k-1}P^{-1}$, so 
\begin{equation*}
\ell\left(\mathcal{S}\right)\le 2n+t+k-1-4 \le 3n-5
\end{equation*}
by Corollary 3.5.

\ding{193}If $k=1$, then $t+k-1= t-k+1=t$. In this case, the Jordan form becomes 
\begin{equation*}
\textbf{$P^{-1}AP$}=    
\begin{pmatrix}     
J_{t}(\lambda_1) &     \\    
&\lambda_2\\
&  & N_{t\times t}     
\end{pmatrix}.\
\end{equation*}
There are at most two diagonal blocks $J_t(\lambda_1)$ in $P^{-1}AP$. Consider two cases as follow.

$\bullet$ Assume that there are two diagonal blocks $J_{t}(\lambda_1)$ in $P^{-1}AP$. In this case, 
\begin{equation*}
\textbf{$P^{-1}AP$}=    
\begin{pmatrix}     
J_{t}(\lambda_1) &     \\    
&\lambda_2\\
&  & J_{t}(\lambda_1)     
\end{pmatrix}.\
\end{equation*} 
Then 
\begin{equation*}
(P^{-1}AP-\lambda_1\mathnormal{E})^{t}=(\lambda_2-\lambda_1)^{t}\mathnormal{E}_{t+1,t+1}.
\end{equation*}

$\bullet$ Assume that there is at most one diagonal block $J_{t}(\lambda_1)$ in $P^{-1}AP$. In this case, $P^{-1}AP$ has an eigenvalue $\lambda_1$ with a single corresponding Jordan block of the maximal size. Consequently,
\begin{equation*}
(P^{-1}AP-\lambda_1\mathnormal{E})^{t-1}(P^{-1}AP-\lambda_2\mathnormal{E})=(\lambda_1-\lambda_2)\mathnormal{E}_{1,t}.
\end{equation*}

In both cases, the linear span of $\mathnormal{E}, A, \cdots, A^{t}$ contains a rank-one matrix. By Corollary 3.5, we conclude
\begin{equation*}
\ell\left(\mathcal{S}\right)\le 2n+t+k-1-4 \le 3n-5.
\end{equation*}

\item [(\rmnum{2})]If the minimal polynomial of $A$ is $m(\lambda)=(\lambda-\lambda_1)^{t+k-2}(\lambda-\lambda_2)^2$, by the Jordan Decomposition Theorem, then there exists an invertible matrix $P$ such that 
\begin{equation*}
\textbf{$P^{-1}AP$}=    
\begin{pmatrix}     
J_{t+k-2}(\lambda_1) &\\         
&J_2(\lambda_2)\\     
&  & N_{(t-k+1)\times (t-k+1)}     
\end{pmatrix}.\
\end{equation*}
There is at most one diagonal block $J_{t+k-2}(\lambda_1)$ in $N$. 

\ding{192}If $k>1$, then $t+k-2> t-k+1$. This implies that $P^{-1}AP$ has only one diagonal block $J_{t+k-2}(\lambda_1)$. Consequently, 
\begin{equation*}
(P^{-1}AP-\lambda_1\mathnormal{E})^{t+k-3}(P^{-1}AP-\lambda_2\mathnormal{E})^2=(\lambda_1-\lambda_2)^2\mathnormal{E}_{1,t+k-2}.
\end{equation*}
The linear span of $\mathnormal{E}, A, \cdots, A^{t+k-1}$ contains the rank-one matrix $P\mathnormal{E}_{1,t+k-2}P^{-1}$. By Corollary 3.5, $\ell\left(\mathcal{S}\right)\le 2n+t+k-1-4 \le 3n-5$.

\ding{193}If $k=1$, then $t+k-2<t-k+1$. In this case, the Jordan form becomes
\begin{equation*}
\textbf{$P^{-1}AP$}=    
\begin{pmatrix}     
J_{t-1}(\lambda_1) &     \\    
&J_2(\lambda_2)\\
&  & N_{t\times t}     
\end{pmatrix}.\
\end{equation*}
There are at most two diagonal blocks $J_{t-1}(\lambda_1)$ in $P^{-1}AP$. Consider two cases as follow.

$\bullet$ Assume that there are two diagonal blocks $J_{t-1}(\lambda_1)$ in $P^{-1}AP$. Thus, $N$ has no Jordan block $J_2(\lambda_2)$, since $2(t-1)+4=2t+2>n=2t+1$. Therefore, we have 
\begin{equation*}
(P^{-1}AP-\lambda_1\mathnormal{E})^{t-1}(P^{-1}AP-\lambda_2\mathnormal{E})=(\lambda_2-\lambda_1)^{t-1}\mathnormal{E}_{t,t+1}.
\end{equation*}

$\bullet$ Assume that there is at most one diagonal block $J_{t}(\lambda_1)$ in $P^{-1}AP$. In this case, $P^{-1}AP$ has an eigenvalue $\lambda_1$ with a single corresponding Jordan block of the maximal size. Then
\begin{equation*}
(P^{-1}AP-\lambda_1\mathnormal{E})^{t-2}(P^{-1}AP-\lambda_2\mathnormal{E})^2=(\lambda_1-\lambda_2)^2\mathnormal{E}_{1,t-1}.
\end{equation*}
In both cases, $\langle\mathnormal{E}, A, \cdots, A^{t}\rangle$ contains a rank-one matrix, so $\ell\left(\mathcal{S}\right)\le 2n+t+k-1-4 \le 3n-5$ by Corollary 3.5.

\item [(\rmnum{3})]If the minimal polynomial of $A$ is $m(\lambda)=(\lambda-\lambda_1)^{t+k-s}(\lambda-\lambda_2)^s$, where $3\le s\le \lfloor{\frac{t+k}{2}}\rfloor$, by the Jordan Decomposition Theorem, then there exists an invertible matrix $P$ such that 
\begin{equation*}
\textbf{$P^{-1}AP$}=    
\begin{pmatrix}     
J_{t+k-s}(\lambda_1) &     \\         
&J_s(\lambda_2)\\     
&  & N_{(t-k+1)\times (t-k+1)}     
\end{pmatrix}.\
\end{equation*} 
Let $u$ denote the number of diagonal blocks $J_{t+k-s}(\lambda_1)$ in $N$. Note that $t-k+1-u(t+k-s)\ge 0$. Then 
\begin{equation*}
\begin{aligned}
    t-k+1&\ge u(t+k-s)=\frac{u}{2}(t+k-s)+\frac{u}{2}(t+k-s)\\
    &\ge \frac{u}{2}(t+k-s)+\frac{u}{2}s=\frac{u}{2}(t+k),
\end{aligned}
\end{equation*}
since $t+k-s\ge s$. Thus,
\begin{equation*}
\begin{aligned}    
u\le \frac{2(t-k+1)}{t+k}=\frac{2(t+k)+2-4k}{t+k}=2+\frac{2-4k}{t+k}<2.
\end{aligned}
\end{equation*}
Therefore, there is at most one diagonal block $J_{t+k-s}(\lambda_1)$ in $N$. 

$\bullet$ Assume that there is only one diagonal block $J_{t+k-s}(\lambda_1)$ in $N$. In this case, the Jordan block $J_{s}(\lambda_2)$ does not exist in $N$, since $(t+k-s)+s=t+k>t-k+1$. Consequently,
\begin{equation*}
(P^{-1}AP-\lambda_1\mathnormal{E})^{t+k-s}(P^{-1}AP-\lambda_2\mathnormal{E})^{s-1}=(\lambda_2-\lambda_1)^{t+k-s}\mathnormal{E}_{t+k-s+1,t+k}.
\end{equation*}

$\bullet$ Assume that there is no diagonal block $J_{t+k-s}(\lambda_1)$ in $N$. In this case, $P^{-1}AP$ has an eigenvalue $\lambda_1$ with a single corresponding Jordan block of the maximal size. Thus, we have 
\begin{equation*}
(P^{-1}AP-\lambda_1\mathnormal{E})^{t+k-s-1}(P^{-1}AP-\lambda_2\mathnormal{E})^s=(\lambda_1-\lambda_2)^s\mathnormal{E}_{1,t+k-s}.
\end{equation*}

In both cases, the linear span of $\mathnormal{E}, A, \cdots, A^{t+k-1}$ contains a rank-one matrix. By Corollary 3.5, $\ell\left(\mathcal{S}\right)\le 2n+t+k-1-4 \le 2n+2t+1-5=3n-5$.

\item [({3})]Assume that $A$ has three distinct eigenvalues $\lambda_1, \lambda_2$ and $\lambda_3$. The possible form of the minimal polynomial is $m(\lambda)=(\lambda-\lambda_1)^{i_1}(\lambda-\lambda_2)^{i_2}(\lambda-\lambda_3)^{i_3}$($\lambda_1$, $\lambda_2$ and $\lambda_3$ appear symmetrically in this situation), where $1\le i_3\le i_2 \le i_1 \le t+k-2$. 
\item [(\rmnum{1})]If the minimal polynomial of $A$ is $m(\lambda)=(\lambda-\lambda_1)^{t+k-j_1-1}(\lambda-\lambda_2)^{j_1}(\lambda-\lambda_3)$, where $t+k-j_1-1\ge j_1\ge 1$, by the Jordan Decomposition Theorem, then there exists an invertible matrix $P$ such that 
\begin{equation*}
\textbf{$P^{-1}AP$}=    
\begin{pmatrix}     
J_{t+k-j_1-1}(\lambda_1) &     \\         
&J_{j_1}(\lambda_2)\\          
&   &\lambda_3\\     
& &  & N_{(t-k+1)\times (t-k+1)}     
\end{pmatrix}.\
\end{equation*}
Let $u$ denote the number of diagonal blocks $J_{t+k-j_1-1}(\lambda_1)$ in $N$. Note that 
\begin{equation*}
t-k-u(t+k-j_1-1)\ge 0.
\end{equation*}
Then
\begin{equation*}
\begin{aligned}    
t-k+1&\ge u(t+k-j_1-1)=\frac{u}{2}(t+k-j_1-1)+\frac{u}{2}(t+k-j_1-1)\\    
&\ge \frac{u}{2}(t+k-j_1-1)+\frac{u}{2}j_1=\frac{u}{2}(t+k-1),
\end{aligned}
\end{equation*}
since $t+k-j_1-1\ge j_1$. Thus,
\begin{equation*}
\begin{aligned}    
u\le \frac{2(t-k+1)}{t+k-1}=\frac{2(t+k-1)+4-4k}{t+k-1}=2+\frac{4-4k}{t+k-1}\le 2.
\end{aligned}
\end{equation*}
Therefore, there are at most two Jordan blocks $J_{t+k-j_1-1}(\lambda_1)$ in $N$. 

\item[(a)] If $k = 1$, then $u\le2$, that is, there are at most two blocks $J_{t-j_1}(\lambda_1)$ in $N$. Consider three cases as follows.

\item [$(a_1)$]If there are two diagonal blocks $J_{t-j_1}(\lambda_1)$ in $N$, then the Jordan block $J_{j_1}(\lambda_2)$ cannot coexist with $J_{t-j_1}(\lambda_1)$ in $N$, since $2(t-j_1)+j_1=(t-j_1)+t>t$. Thus, we have
\begin{equation*}
\begin{aligned}
&(P^{-1}AP-\lambda_1\mathnormal{E})^{t-j_1}(P^{-1}AP-\lambda_2\mathnormal{E})^{j_1-1}(P^{-1}AP-\lambda_3\mathnormal{E})\\
&=(\lambda_2-\lambda_1)^{t-j_1}(\lambda_2-\lambda_3)\mathnormal{E}_{t-j_1+1,t}.
\end{aligned}
\end{equation*}

\item [$(a_2)$]If there is only one diagonal block $J_{t-j_1}(\lambda_1)$ in $N$, then the Jordan block $J_{j_1}(\lambda_2)$ is at most one in $N$, since $(t-j_1)+j_1=t$. Consider two cases.

$\bullet$ Assume that there is one block $J_{j_1}(\lambda_2)$ in $N$. $N$ has no $\lambda_3E_1$, since 
\begin{equation*}
(t-j_1)+j_1+1=t+1>t.
\end{equation*}
Thus, we have
\begin{equation*}
(P^{-1}AP-\lambda_1\mathnormal{E})^{t-j_1}(P^{-1}AP-\lambda_2\mathnormal{E})^{j_1}=(\lambda_3-\lambda_1)^{t-j_1}(\lambda_3-\lambda_2)^{j_1}\mathnormal{E}_{t+1,t+1}.
\end{equation*}

$\bullet$ Assume that there is no block $J_{j_1}(\lambda_2)$ in $N$. Then,
\begin{equation*}
\begin{aligned}
&(P^{-1}AP-\lambda_1\mathnormal{E})^{t-j_1}(P^{-1}AP-\lambda_2\mathnormal{E})^{j_1-1}(P^{-1}AP-\lambda_3\mathnormal{E})\\
&=(\lambda_2-\lambda_1)^{t-j_1}(\lambda_2-\lambda_3)\mathnormal{E}_{t-j_1+1,t}.
\end{aligned}
\end{equation*}

\item[$(a_3)$]If there is no Jordan block $J_{t-j_1}(\lambda_1)$ in $N$. In this case, $P^{-1}AP$ has an eigenvalues $\lambda_1$ with a single corresponding Jordan block of the maximal size. Therefore,
\begin{equation*}
\begin{aligned}
&(P^{-1}AP-\lambda_1\mathnormal{E})^{t-j_1-1}(P^{-1}AP-\lambda_2\mathnormal{E})^{j_1}(P^{-1}AP-\lambda_3\mathnormal{E})\\
&=(\lambda_1-\lambda_2)^{j_1}(\lambda_1-\lambda_3)\mathnormal{E}_{1,t-j_1},
\end{aligned}
\end{equation*}

In above cases, $\langle\mathnormal{E}, A, \cdots, A^{t}\rangle$ contains a rank-one matrix. By Corollary 3.5, we conclude $\ell\left(\mathcal{S}\right)\le 2n+t+k-1-4 \le 2n+t-4=\frac{5n}{2}-\frac{9}{2}$.

\item[(b)] If $k>1$, then $u<2$. It implies that there is at most one Jordan block $J_{t+k-j_1-1}(\lambda_1)$ in $N$. Consider two cases as follows.

$\bullet$ Assume that there is only one diagonal block $J_{t+k-j_1-1}(\lambda_1)$ in $N$. In this case, the Jordan block $J_{j_1}(\lambda_2)$ does not exist in $N$, since $(t+k-j_1-1)+j_1=t+k-1>t-k+1$. Thus, we have 
\begin{equation*}
\begin{aligned}
&(P^{-1}AP-\lambda_1\mathnormal{E})^{t+k-j_1-1}(P^{-1}AP-\lambda_2\mathnormal{E})^{j_1-1}(P^{-1}AP-\lambda_3\mathnormal{E})\\
&=(\lambda_2-\lambda_1)^{t+k-j_1-1}(\lambda_2-\lambda_3)\mathnormal{E}_{t+k-j_1,t+k-1}.
\end{aligned}
\end{equation*}

$\bullet$ Assume that there is no diagonal block $J_{t+k-j_1-1}(\lambda_1)$ in $N$. In this case, $P^{-1}AP$ has an eigenvalue $\lambda_1$ with a single corresponding Jordan block of the maximal size. Thus, we have 
\begin{equation*}
\begin{aligned}
&(P^{-1}AP-\lambda_1\mathnormal{E})^{t+k-j_1-2}(P^{-1}AP-\lambda_2\mathnormal{E})^{j_1}(P^{-1}AP-\lambda_3\mathnormal{E})\\
&=(\lambda_1-\lambda_2)^{j_1}(\lambda_1-\lambda_3)\mathnormal{E}_{1,t+k-j_1-1}.
\end{aligned}
\end{equation*}
In both cases, the linear span of $\mathnormal{E}, A, \cdots, A^{t+k-1}$ contains a rank-one matrix. By Corollary 3.5, $\ell\left(\mathcal{S}\right)\le 2n+t+k-1-4 \le 2n+2t+1-5=3n-5$.

\item [(\rmnum{2})]If the minimal polynomial of $A$ is $m(\lambda)=(\lambda-\lambda_1)^{t+k-j_2-2}(\lambda-\lambda_2)^{j_2}(\lambda-\lambda_3)^2$, where $t+k-j_2-2\ge j_2\ge 2$ and $t+k\ge 6$, by the Jordan Decomposition Theorem, then there exists an invertible matrix $P$ such that 
\begin{equation*}
\textbf{$P^{-1}AP$}=    
\begin{pmatrix}     
J_{t+k-j_2-2}(\lambda_1) &     \\         
&J_{j_2}(\lambda_2)\\          
&   &J_2(\lambda_3)\\     
& &  & N_{(t-k+1)\times (t-k+1)}     
\end{pmatrix}.\
\end{equation*}
Let $u$ denote the number of the Jordan blocks $J_{t+k-j_2-2}(\lambda_1)$ in $N$. Note that 
\begin{equation*}
t-k+1-u(t+k-j_2-2)\ge 0.
\end{equation*}
Then
\begin{equation*}
\begin{aligned}    
t-k+1&\ge u(t+k-j_2-2)=\frac{u}{2}(t+k-j_2-2)+\frac{u}{2}(t+k-j_2-2)\\    
&\ge \frac{u}{2}(t+k-j_2-2)+\frac{u}{2}j_2=\frac{u}{2}(t+k-2),
\end{aligned}
\end{equation*}
since $t+k-j_2-2\ge j_2$. Thus,
\begin{equation*}
\begin{aligned}    
u\le \frac{2(t-k+1)}{t+k-2}=\frac{2(t+k-2)+6-4k}{t+k-2}=2+\frac{6-4k}{t+k-2}.
\end{aligned}
\end{equation*}
Consider two cases as follows.
\item[(a)] If $k = 1$, then $u\le 2+\frac{2}{t-1}\le 2+\frac{2}{4}<3$, that is, there are at most two blocks $J_{t-j_2-1}(\lambda_1)$ in $N$. Consider three cases as follows.
\item [$(a_1)$]If there are two diagonal blocks $J_{t-j_2-1}(\lambda_1)$ in $N$, then the block $J_{j_2}(\lambda_2)$ does not exist in $N$, since $2(t-j_2-1)+j_2=(t-j_2-1)+t-1\ge 2+t-1=t+1>t$. Consequently,
\begin{equation*}
\begin{aligned}
&(P^{-1}AP-\lambda_1\mathnormal{E})^{t-j_2-1}(P^{-1}AP-\lambda_2\mathnormal{E})^{j_2-1}(P^{-1}AP-\lambda_3\mathnormal{E})^2\\
&=(\lambda_2-\lambda_1)^{t-j_2-1}(\lambda_2-\lambda_3)^2\mathnormal{E}_{t-j_2,t-1}.
\end{aligned}
\end{equation*}

\item [$(a_2)$]If there is only one diagonal block $J_{t-j_2-1}(\lambda_1)$ in $N$, then the Jordan block $J_{j_2}(\lambda_2)$ is at most one in $N$, since $t-k+1-(t-j_2-1)-j_2=j_2+2-k\le j_2+1<2j_2$. Consider two cases.

$\bullet$ Assume that there is one block $J_{j_2}(\lambda_2)$ in $N$. $N$ has no $J_2(\lambda_3)$, since 
\begin{equation*}
(t-j_2-1)+j_2+2=t+1>t.
\end{equation*}
Then
\begin{equation*}
\begin{aligned}
&(P^{-1}AP-\lambda_1\mathnormal{E})^{t-j_2-1}(P^{-1}AP-\lambda_2\mathnormal{E})^{j_2}(P^{-1}AP-\lambda_3\mathnormal{E})\\
&=(\lambda_3-\lambda_1)^{t-j_2-1}(\lambda_3-\lambda_2)^{j_2}\mathnormal{E}_{t,t+1}.
\end{aligned}
\end{equation*}

$\bullet$ Assume that there is no block $J_{j_2}(\lambda_2)$ in $N$. Consequently,
\begin{equation*}
\begin{aligned}
&(P^{-1}AP-\lambda_1\mathnormal{E})^{t-j_2-1}(P^{-1}AP-\lambda_2\mathnormal{E})^{j_2-1}(P^{-1}AP-\lambda_3\mathnormal{E})^2\\
&=(\lambda_2-\lambda_1)^{t-j_2-1}(\lambda_2-\lambda_3)^2\mathnormal{E}_{t-j_2,t-1}.
\end{aligned}
\end{equation*}

\item[$(a_3)$]If there is no Jordan block $J_{t-j_2-1}(\lambda_1)$ in $N$. Thus, we have 
\begin{equation*}
\begin{aligned}
&(P^{-1}AP-\lambda_1\mathnormal{E})^{t-j_2-2}(P^{-1}AP-\lambda_2\mathnormal{E})^{j_2}(P^{-1}AP-\lambda_3\mathnormal{E})^2\\
&=(\lambda_1-\lambda_2)^{j_2}(\lambda_1-\lambda_3)^2\mathnormal{E}_{1,t-j_2-1}.
\end{aligned}
\end{equation*}
In above cases, the linear span of $\mathnormal{E}, A, \cdots, A^{t+k-1}$ contains a rank-one matrix. By Corollary 3.5, $\ell\left(\mathcal{S}\right)\le 2n+t+k-1-4 \le 3n-5$.

\item[(b)] If $k>1$, then $u\le 2+\frac{6-4k}{t+k-2}<2$, that is, there is at most one Jordan block $J_{t+k-j_2-2}(\lambda_1)$ in $N$. Consider two cases as follows.

\item[$(b_1)$]Assume that there is only one block $J_{t+k-j_2-2}(\lambda_1)$ in $N$. Then $N$ has no block $J_{j_2}(\lambda_2)$, since $(t+k-j_2-2)+j_2=t+k-2\ge t>t-k+1$. Thus, we have 
\begin{equation*}
\begin{aligned}
&(P^{-1}AP-\lambda_1\mathnormal{E})^{t+k-j_2-2}(P^{-1}AP-\lambda_2\mathnormal{E})^{j_2-1}(P^{-1}AP-\lambda_3\mathnormal{E})^2\\
&=(\lambda_2-\lambda_1)^{t+k-j_2-2}(\lambda_2-\lambda_3)^2\mathnormal{E}_{t+k-j_2-1,t+k-2}.
\end{aligned}
\end{equation*}

\item[$(b_2)$]Assume that there is no block $J_{t+k-j_2-2}(\lambda_1)$ in $N$. Then 
\begin{equation*}
\begin{aligned}
&(P^{-1}AP-\lambda_1\mathnormal{E})^{t+k-j_2-3}(P^{-1}AP-\lambda_2\mathnormal{E})^{j_2}(P^{-1}AP-\lambda_3\mathnormal{E})^2\\
&=(\lambda_1-\lambda_2)^{j_2}(\lambda_1-\lambda_3)^2\mathnormal{E}_{1, t+k-j_2-2}.
\end{aligned}
\end{equation*}

In both cases, the linear span of $\mathnormal{E}, A, \cdots, A^{t+k-1}$ contains a rank-one matrix. By Corollary 3.5, we conclude $\ell\left(\mathcal{S}\right)\le 2n+t+k-1-4 \le 3n-5$.

\item [(\rmnum{3})]If the minimal polynomial of $A$ is $m(\lambda)=(\lambda-\lambda_1)^{t+k-j_p-p}(\lambda-\lambda_2)^{j_p}(\lambda-\lambda_3)^p$, where $t+k-j_p-p\ge j_p\ge p$ and $t+k\ge 3p$, by the Jordan Decomposition Theorem, then there exists an invertible matrix $P$ such that 
\begin{equation*}
\textbf{$P^{-1}AP$}=    
\begin{pmatrix}     
J_{t+k-j_p-p}(\lambda_1) &     \\         
&J_{j_p}(\lambda_2)\\          
&   &J_p(\lambda_3)\\     
& &  & N_{(t-k+1)\times (t-k+1)}     
\end{pmatrix}.\
\end{equation*}
Let $u$ denote the number of the Jordan blocks $J_{t+k-j_p-p}(\lambda_1)$ in $N$. Note that 
\begin{equation*}
t-k+1-u(t+k-j_p-p)\ge 0.
\end{equation*}
Then
\begin{equation*}
\begin{aligned}    
t-k+1&\ge u(t+k-j_p-p)=\frac{u}{2}(t+k-j_p-p)+\frac{u}{2}(t+k-j_p-p)\\    
&\ge \frac{u}{2}(t+k-j_p-p)+\frac{u}{2}j_p=\frac{u}{2}(t+k-p),
\end{aligned}
\end{equation*}
since $t+k-j_p-p\ge j_p$. Thus,
\begin{equation*}
\begin{aligned}    
u&\le \frac{2(t-k+1)}{t+k-p}=\frac{2(t+k-p)+2p+2-4k}{t+k-p}=2+\frac{2p+2-4k}{t+k-p}\\
&\le 2+\frac{2p+2-4k}{2p}=3+\frac{2-4k}{2p}<3,
\end{aligned}
\end{equation*}
since $t+k\ge 3p$. Therefore, there are at most two blocks $J_{t+k-j_p-p}(\lambda_1)$ in $N$. Consider three cases as follows.

\item[(a)]If there are two diagonal blocks $J_{t+k-j_p-p}(\lambda_1)$ in $N$, then 
\begin{equation*}
\begin{aligned}
t-k+1-2(t+k-j_p-p)&=-t-3k+1+2j_p+2p=2j_p+2p-(t+k)-2k+1\\
&\le 2j_p+2p-3p-2k+1=2j_p-p-2k+1<2j_p,
\end{aligned}
\end{equation*}
since $t+k\ge 3p$. Thus, the Jordan block $J_{j_p}(\lambda_2)$ has at most one in $N$. 
\item[$(a_1)$]Assume that there is only one block $J_{j_p}(\lambda_2)$ in $N$. Then $N$ has no block $J_p(\lambda_3)$, since $2(t+k-j_p-p)+j_p+p=(t+k-j_p-p)+t+k>t-k+1$. Then 
\begin{equation*}
\begin{aligned}
&(P^{-1}AP-\lambda_1\mathnormal{E})^{t+k-j_p-p}(P^{-1}AP-\lambda_2\mathnormal{E})^{j_p}(P^{-1}AP-\lambda_3\mathnormal{E})^{p-1}\\
&=(\lambda_3-\lambda_1)^{t+k-j_p-p}(\lambda_3-\lambda_2)^{j_p}\mathnormal{E}_{t+k-p+1,t+k}.
\end{aligned}
\end{equation*}
\item[$(a_2)$]Assume that there is no Jordan block $J_{j_p}(\lambda_2)$ in $N$. Thus, we have 
\begin{equation*}
\begin{aligned}
&(P^{-1}AP-\lambda_1\mathnormal{E})^{t+k-j_p-p}(P^{-1}AP-\lambda_2\mathnormal{E})^{j_p-1}(P^{-1}AP-\lambda_3\mathnormal{E})^p\\
&=(\lambda_2-\lambda_1)^{t+k-j_p-p}(\lambda_2-\lambda_3)^p\mathnormal{E}_{t+k-j_p-p+1,t+k-p}.
\end{aligned}
\end{equation*}

In both cases, the linear span of $\mathnormal{E}, A, \cdots, A^{t+k-1}$ contains a rank-one matrix. By Corollary 3.5, $\ell\left(\mathcal{S}\right)\le 2n+t+k-1-4 \le 3n-5$.

\item[(b)]If there is one diagonal block $J_{t+k-j_p-p}(\lambda_1)$ in $N$, then 
\begin{equation*}
t-k+1-(t+k-j_p-p)=j_p+p-2k+1\le j_p+j_p-2k+1=2j_p-2k+1<2j_p,
\end{equation*}
since $j_p\ge p$. It implies that the Jordan block $J_{j_p}(\lambda_2)$ has at most one in $N$. 
\item[$(b_1)$]Assume that there is only one block $J_{j_p}(\lambda_2)$ in $N$. Then $N$ has no Jordan block $J_p(\lambda_3)$, since $(t+k-j_p-p)+j_p+p=t+k>t-k+1$. Thus, we have 
\begin{equation*}
\begin{aligned}
&(P^{-1}AP-\lambda_1\mathnormal{E})^{t+k-j_p-p}(P^{-1}AP-\lambda_2\mathnormal{E})^{j_p}(P^{-1}AP-\lambda_3\mathnormal{E})^{p-1}\\
&=(\lambda_3-\lambda_1)^{t+k-j_p-p}(\lambda_3-\lambda_2)^{j_p}\mathnormal{E}_{t+k-p+1,t+k}.
\end{aligned}
\end{equation*}
\item[$(b_2)$]Assume that there is no Jordan block $J_{j_p}(\lambda_2)$ in $N$. Consequently, 
\begin{equation*}
\begin{aligned}
&(P^{-1}AP-\lambda_1\mathnormal{E})^{t+k-j_p-p}(P^{-1}AP-\lambda_2\mathnormal{E})^{j_p-1}(P^{-1}AP-\lambda_3\mathnormal{E})^p\\
&=(\lambda_2-\lambda_1)^{t+k-j_p-p}(\lambda_2-\lambda_3)^p\mathnormal{E}_{t+k-j_p-p+1,t+k-p}.
\end{aligned}
\end{equation*}
In both cases, $\langle\mathnormal{E}, A, \cdots, A^{t+k-1}\rangle$ contains a rank-one matrix. By Corollary 3.5, 
\begin{equation*}
\ell\left(\mathcal{S}\right)\le 2n+t+k-1-4 \le 3n-5.
\end{equation*}

\item[(c)]If there is no diagonal block $J_{t+k-j_p-p}(\lambda_1)$ in $N$, then we have 
\begin{equation*}
\begin{aligned}
&(P^{-1}AP-\lambda_1\mathnormal{E})^{t+k-j_p-p-1}(P^{-1}AP-\lambda_2\mathnormal{E})^{j_p}(P^{-1}AP-\lambda_3\mathnormal{E})^{p}\\
&=(\lambda_1-\lambda_2)^{j_p}(\lambda_1-\lambda_3)^{p}\mathnormal{E}_{1,t+k-j_p-p}.
\end{aligned}
\end{equation*}

In this case, the linear span of $\mathnormal{E}, A, \cdots, A^{t+k-1}$ contains a rank-one matrix. By Corollary 3.5, $\ell\left(\mathcal{S}\right)\le 2n+t+k-1-4 \le 3n-5$.

\item [({4})] Assume that $A$ has $s$ distinct eigenvalues $\lambda_1, \lambda_2, \cdots,\lambda_s$, where $4\le s\le t$ and $s\in \mathbb{N}$. Note that $\lambda_1, \lambda_2, \cdots,\lambda_s$ appear symmetrically, then the minimal polynomial of $A$ is only possible 
\begin{equation*}
m(\lambda)=(\lambda-\lambda_1)^{i_1}(\lambda-\lambda_2)^{i_2}\cdots(\lambda-\lambda_s)^{i_s},
\end{equation*}
where $i_1,i_2,\cdots, i_s\in\mathbb{Z}^+$ and $1\le i_s\le i_{s-1}\le \cdots \le i_2\le i_1\le t+k-s+1.$
By the Jordan Decomposition Theorem, then there exists a invertible matrix $P$ such that 
\begin{equation*}
\textbf{$P^{-1}AP$}=    
\begin{pmatrix}     
J_{i_1}(\lambda_1) &     \\         
&J_{i_2}(\lambda_2)\\          
&   &\ddots\\     
& &  & J_{i_s}(\lambda_s) \\      
& &  & & N_{(t-k)\times (t-k)}    
\end{pmatrix}.
\end{equation*}
Assume that there are not less than two blocks $J_{i_r}$ in $P^{-1}AP$ for every $r$, $r=1,2,\cdots, s$. Then 
\begin{equation*}   
n=2t\ge 2(i_1+i_2+\cdots +i_s)=2t+2k,
\end{equation*} 
this contradicts $2t+2k>2t$ for any $k$. Thus, there exists $q$, such that there are not more than two blocks $J_{i_q}$ in $P^{-1}AP$, where $1\le q\le s$. Therefore, the linear span of $\mathnormal{E}, A, \cdots, A^{t+k-1}$ contains a rank-one matrix, so $\ell\left(\mathcal{S}\right)\le 2n+t+k-1-4=3n-5$ in this case.
\end{proof}

\begin{theo}
\label{T12}
Let $\mathbb{F}$ be an algebraically closed field and let $n,t\in\mathbb{Z}^+$. Let $\mathnormal{S}$ be a generating system of $\mathnormal{M}_n\left(\mathbb{F}\right)$. If $2t\le n\le 3t-1$ and $m(\mathnormal{S})=t$, then $l\left(\mathnormal{S}\right)\le \frac{7n}{2}-4$.
\end{theo}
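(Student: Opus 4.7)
The plan is to produce a matrix of rank at most $2$ in $L_{t-1}(\mathcal{S})$, and then invoke Pappacena's bound (Theorem 3.4) with $r \leq 2$ and $k = t-1$. Since the hypothesis $n \geq 2t$ gives $t \leq n/2$, this will directly yield $\ell(\mathcal{S}) \leq 3n + t - 4 \leq \frac{7n}{2} - 4$ in the rank-$2$ case (and a strictly smaller bound in the rank-$1$ case), which matches the claim.

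First I would pick $A \in \mathcal{S}$ realizing $\deg A = t$ and list its distinct eigenvalues $\lambda_1, \ldots, \lambda_s$ over the algebraically closed field $\mathbb{F}$, together with the largest Jordan block size $m_i$ at each $\lambda_i$, so that $\sum m_i = t$. Let $d_i$ be the number of Jordan blocks of that maximal size $m_i$ at eigenvalue $\lambda_i$; then $\sum d_i m_i \leq n$. The crucial combinatorial observation is that the hypothesis $n \leq 3t - 1$ forces some index $i_0$ with $d_{i_0} \leq 2$: otherwise every $d_i \geq 3$ gives $n \geq \sum d_i m_i \geq 3 \sum m_i = 3t$, contradicting $n \leq 3t-1$.

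Next I would consider the polynomial
\[
p(x) = (x - \lambda_{i_0})^{m_{i_0} - 1} \prod_{j \neq i_0} (x - \lambda_j)^{m_j}
\]
of degree $t - 1$, so that $p(A) \in \langle E, A, \ldots, A^{t-1}\rangle \subseteq L_{t-1}(\mathcal{S})$. On each generalized eigenspace $V_j$ with $j \neq i_0$, the factor $(A - \lambda_j I)^{m_j}$ annihilates $V_j$ because the largest Jordan block at $\lambda_j$ has size exactly $m_j$. On $V_{i_0}$, all factors $(A - \lambda_j I)^{m_j}$ with $j \neq i_0$ restrict to invertible maps, while $(A - \lambda_{i_0} I)^{m_{i_0} - 1}|_{V_{i_0}}$ has rank equal to $d_{i_0}$, the count of maximal-size blocks. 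Hence $\mathrm{rank}(p(A)) = d_{i_0} \in \{1, 2\}$, and applying Theorem 3.4 with $r = d_{i_0}$ and $k = t-1$ gives $\ell(\mathcal{S}) \leq d_{i_0}\, n + (n - d_{i_0}) + (t - 2)$, which evaluates to at most $\frac{7n}{2} - 4$ by the earlier inequality.

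The main obstacle I foresee is the rank calculation in Step~3, especially verifying that degenerate configurations behave as claimed (for instance, the single-eigenvalue case $s = 1$, where the product over $j \neq i_0$ is empty and $p(A) = (A - \lambda_1 I)^{t-1}$). The combinatorial inequality forcing $d_{i_0} \leq 2$ uses only $\sum d_i m_i \leq n$ and $\sum m_i = t$, so it remains valid in these edge cases; once this is settled, the remainder is standard Jordan-form bookkeeping and a one-line application of Theorem 3.4.
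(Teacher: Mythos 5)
Your proposal is correct and follows essentially the same route as the paper: pick $A$ with $\deg A = t$, use the pigeonhole count $n \geq \sum d_i m_i \geq 3t$ to force some eigenvalue whose maximal Jordan block occurs at most twice, exhibit a rank-$\le 2$ matrix in $\langle E, A, \ldots, A^{t-1}\rangle$, and finish with Pappacena's bound (Theorem 3.4) together with $t \le n/2$. Your version is in fact slightly more careful than the paper's, since you write down the explicit polynomial $p(x)$ and verify $\operatorname{rank} p(A) = d_{i_0}$, whereas the paper only asserts the existence of such a low-rank element.
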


\begin{proof}
$\mathnormal{S}$ contains a matrix $A$ with minimal polynomial of degree $t$, since $m(\mathnormal{S})=t$.
Assume that $A$ has $s$ distinct eigenvalues $\lambda_1, \lambda_2, \cdots,\lambda_s$, then the minimal polynomial of $A$ is only possible $m(\lambda)=(\lambda-\lambda_1)^{i_1}(\lambda-\lambda_2)^{i_2}\cdots(\lambda-\lambda_s)^{i_s}$($\lambda_1, \lambda_2, \cdots,\lambda_s$ appear symmetrically in this situation), where $i_1,i_2,\cdots, i_s\in\mathbb{Z}^+$ and $1\le i_s\le i_{s-1}\le \cdots \le i_2\le i_1\le t-s+1$.By the Jordan Decomposition Theorem, then there exists an invertible matrix $P$ such that 
\begin{equation*}
\textbf{$P^{-1}AP$}=
    \begin{pmatrix}
     J_{i_1}(\lambda_1) &     \\
         &J_{i_2}(\lambda_2)\\
          &   &\ddots\\
     & &  & J_{i_s}(\lambda_s) \\
      & &  & & N_{(n-t)\times (n-t)}
    \end{pmatrix}.
\end{equation*}

Assume that there are not less than three blocks $J_{i_r}$ in $P^{-1}AP$ for every $r$, $r=1,2,\cdots, s$. Then 
\begin{equation*}
   n\ge 3(i_1+i_2+\cdots +i_s)=3t,
\end{equation*}
 this contradicts $3t-1\ge n\ge 2t$. Thus, there exists $k$, such that there are not more than two blocks $J_{i_k}$ in $P^{-1}AP$, where $1\le k\le s$. Hence, the linear span of $\mathnormal{E}, A, \cdots, A^{t-1}$ contains a matrix of rank $r\le 2$. Then, $l\left(\mathnormal{S}\right)\le 2n+n-2+t-1=3n-t-4\le\frac{7n}{2}-4$ by Theorem 3.4.
\end{proof}

\section{Acknowledgments}
I would like to express my sincere gratitude to Shitov for sharing their latest results with me via email. These findings have made significant contributions to this research field.

\end{document}